\documentclass[a4paper,fleqn]{cas-sc}
\usepackage[numbers]{natbib}

\usepackage{latexsym,amsmath,amsfonts,amsthm,amssymb,bbm,amsbsy}
\usepackage{mathrsfs}
\usepackage{verbatim}
\usepackage{amscd}
\usepackage{commath}
\usepackage{epsfig}
\usepackage{graphicx}
\usepackage{caption}
\usepackage[all]{hypcap}
\usepackage{enumitem}
\usepackage{mathtools}
\usepackage{booktabs}
\usepackage[table]{xcolor}
\usepackage{soul}

\usepackage{tikz}
\usetikzlibrary{decorations.pathreplacing,intersections,backgrounds}
\usepackage{pgfplots}
\usepackage{tkz-euclide}

%%%Author macros
\newcommand{\origin}{\mathbf{o}} 
\newcommand{\Lpq}{\mathcal{L}_{p,q}}
\definecolor{dgreen}{rgb}{0,0.6,0}

% Uncomment and use as if needed
\newtheorem{defn}{Definition}[section]
\newtheorem{thm}[defn]{Theorem}
\newtheorem{theorem}[defn]{Theorem}

\newtheorem{lem}[defn]{Lemma}
\newtheorem{remark}[defn]{Remark}

\begin{document}
%\let\WriteBookmarks\relax
%\def\floatpagepagefraction{1}
%\def\textpagefraction{.001}

% Short title
\shorttitle{Cheeger constants on hyperbolic lattices}    

% Short author
\shortauthors{M. D'Achille, V. Jacquier, W.M. Ruszel}  

% Main title of the paper
\title [mode = title]{On minimal shapes and isoperimetric constants in hyperbolic lattices}

\author[1]{Matteo D'Achille}[orcid=0000-0002-8750-1275]

% Footnote of the second author
\fnmark[1]

% Email id of the second author
\ead{matteo.d-achille@univ-lorraine.fr}

% Address/affiliation
\affiliation[1]{organization={Institut Élie Cartan de Lorraine},
            city={Metz},
            postcode={F-57070}, 
            country={France}}
            
\author[2]{Vanessa Jacquier}[orcid=0000-0002-0125-4825]

% Corresponding author indication
\cormark[1]

% Footnote of the first author
\fnmark[2]

% Email id of the first author
\ead{vanessa.jacquier@unipd.it}

\affiliation[2]{organization={University of Padova},
            addressline={via Trieste 63}, 
            city={Padova (Pd)},
            postcode={35131}, 
            country={Italy}}

\author[3]{Wioletta M. Ruszel}[orcid=0000-0002-8166-2318]

% Footnote of the second author
\fnmark[3]

% Email id of the second author
\ead{w.m.ruszel@uu.nl}

% Credit authorship
%\credit{}

% Address/affiliation
\affiliation[3]{organization={Utrecht University},
            addressline={Budapestlaan 6}, 
            city={Utrecht},
            postcode={3584 CD}, 
            country={The Netherlands}}

\cortext[cor1]{Corresponding author}

% Here goes the abstract
\begin{abstract}
 We fully characterize the set of finite shapes with minimal perimeter on hyperbolic lattices given by regular tilings of the hyperbolic plane whose tiles are regular $p$-gons meeting at vertices of degree $q$, with $1/p+1/q<\frac{1}{2}$. In particular, we prove that the ratio between the perimeter and the area (i.e., the number of vertices) of this set of minimal shapes converges to the isoperimetric constant computed in H\"aggstr\"om-Jonasson-Lyons. In fact, our regular balls which are constructed via layers and not combinatorial balls, will realize the isoperimetric constant for any fixed number of vertices.
\end{abstract}

% Use if graphical abstract is present
%\begin{graphicalabstract}
%\includegraphics{}
%\end{graphicalabstract}
% Keywords
% Each keyword is seperated by \sep
\begin{keywords}
Cheeger (isoperimetric) constant, hyperbolic lattices, minimal shapes \\
{\it AMS} 1991 {\it Subject classifications:} 05B45, 05C10, 05C69, 52B60, 11B68
\end{keywords}

\maketitle

\section{Introduction}

Hyperbolic lattices play a crucial role in geometry, topology, and mathematical physics when considering spaces beyond the Euclidean setting, for example by including negative constant curvature in the space.
Some concrete applications can be found in  e.g.~crystallography~\cite{osti_1979736}, non-Euclidean analog of the quantum spin Hall effect \cite{cryst} or quantum electrodynamics \cite{electro}, with remarkable experimental consequences \cite{chen23}.

 These lattices are discrete symmetry groups acting properly discontinuously on the hyperbolic plane, forming regular tilings or tessellations of the two-dimensional space with constant Ricci curvature of $-1$; for instance, such tilings can be visualized in the Poincaré disc model.  Moreover, they form simple examples of regular lattices (faces with $p\geq 3$ sides where each vertex has degree $q\geq 3$) associated to a non-Euclidean geometric setting and have been constructed through a layer decomposition in \cite{RNO}. For examples, see Figures \ref{fig:L73} and \ref{fig:L37}. We will refer to this hyperbolic lattice as $\mathcal{L}_{p,q}$.

    \begin{minipage}[t]{0.45\textwidth}
        \centering
        \includegraphics[width=0.64\textwidth]{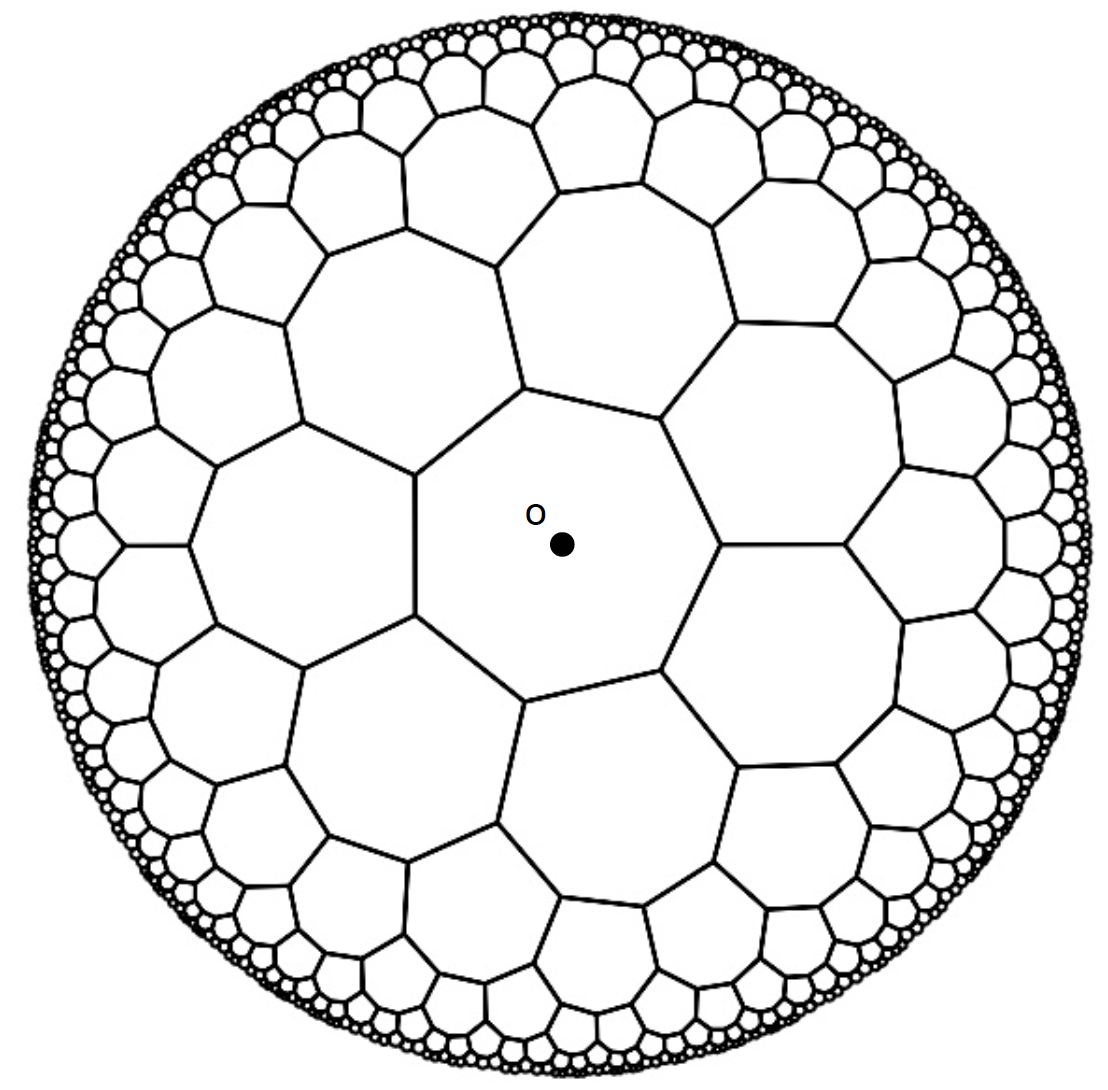} % first figure itse
        \captionof{figure}{Embedding of $\mathcal{L}_{7,3}$ in the hyperbolic disc.}\label{fig:L73}
    \end{minipage}
    \hfill
\begin{minipage}[t]{0.45\textwidth}
        \centering
        \includegraphics[width=0.64\textwidth]{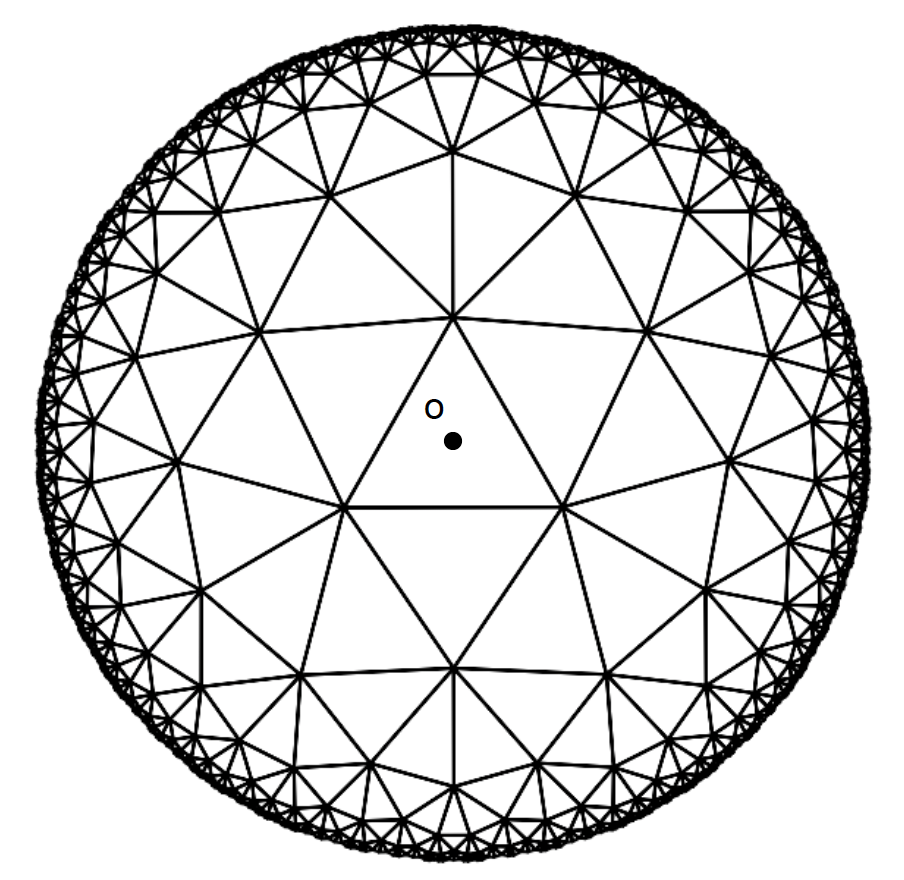} % second figure itself
        \captionof{figure}{Embedding of $\mathcal{L}_{3,7}$ in the hyperbolic disc}.\label{fig:L37}
    \end{minipage}
In \cite{RNO}, the authors observed that $\mathcal{L}_{p,q}$ can alternatively be constructed through successive layers of tilings, starting from a fundamental tiling, where the vertices in each layer follow a simple recursive pattern. This recursive structure was then explicitly used to compute the growth rate of these lattices, see e.g.~\cite{moran1997growth, keller2008geometric, malen2023extremal}. Moreover, the recursive structure is useful in \cite{malen2023extremal, madras}  to obtain interesting results about the number of animals and extremal $p,q$-animals on those lattices. 

Isoperimetric (or Cheeger) constants $i_e (\cdot)$ of a graph are important because they quantify how efficiently a shape encloses an area  relative to its perimeter, and because they quantify global connectivity in a graph. Intuitively, the isoperimetric constant of a graph represents the measure of how difficult it is to separate the graph into two parts. 
For a finite, connected regular graph $H$ of degree $r$, it is well-known that $(r-\lambda)/2\leq i_e (H) \leq \sqrt{2 r (r-\lambda)}$, where $\lambda$ is the first positive eigenvalue of the graph Laplacian, see e.g.~\citep[Theorem 4.11]{hoory2006}.
Conversely, if the isoperimetric constant is zero, the graph contains finite subsets with very few boundary edges, suggesting a structure that can be easily separated (such as $\mathbb{Z}^d$ which are examples of amenable graphs). 

The 1-skeleton of a hyperbolic lattice provides an example of an infinite regular non-amenable graph $G$, i.e. $i_e(G)>0$, meaning that no finite subset has a {\it small boundary} relative to its size. 
The positivity of $i_e(G)$ explains why these graphs (and models defined on these graphs) behave differently from their counterparts on classical structures like $\mathbb{Z}^d$. 

For example, percolation follows a distinct pattern on these lattices~\cite{mertensmoore2017, haggstrom2002explicit, madras}, as e.g.~there are two percolation thresholds where one has regimes in which there are 0, infinitely many infinite clusters or 1 infinite cluster, analogously to what happens for percolation on $\mathbb{T}_d \times \mathbb{Z}$, see also \cite{Newman1990PercolationI}.

The ferromagnetic nearest-neighbors Ising model on these lattices also behaves much differently from its counterpart on $\mathbb{Z}^2$ at low temperature. In the latter setting, Aizenmann~\cite{aizenman1980} and Higuchi~\cite{higuchi1979} proved that the set of \emph{extremal} Gibbs states at low temperature consists of two measures. For hyperbolic lattices, D'Achille--Coquille--Le Ny recently proved in~\cite{DCLN} that, when $p\geq 4$ and $q\geq3$, there are uncountably many extremal Gibbs states at low temperature, indexed by certain bi-infinite graph geodesics on the dual lattice (similar to Dobrushin interfaces), providing a positive answer to a broader and longstanding conjecture by Series--Sinai~\cite{SS} in this special case.

Another important question related to the isoperimetric problem is what are the minimal shapes associated with the minimal perimeter given a fixed volume and whether they realize the isoperimetric constant. For example, in $\mathbb{Z}^d$ minimal shapes are squares and in $\mathbb{R}^d$ balls. 

In \citep[Theorem 4.1]{haggstrom2002explicit} the authors compute the value of the isoperimetric constant $i_e(G)$ for the hyperbolic lattice $\mathcal{L}_{p,q}$ and mention in  \citep[Remark 4.3]{haggstrom2002explicit} that {\it the combinatorial balls} (defined in the dual lattice $\Lpq'=\mathcal{L}_{q,p}$ starting from a vertex $\origin$) { \it do not realize the isoperimetric constants}.  

In this work, we are interested in  constructing explicitly shapes which are realising the isoperimetric inequality. 
We consider regular balls $B_{r}(\origin)$ defined as the union of layers $r+1$ of the graph instead of combinatorial balls $\mathcal{B}_{r}(\origin)$, see examples in Figures \ref{fig:in1}, \ref{fig:in2} and Definition \ref{def:regular} for a precise formulation.

\vspace{0.3cm}
    \begin{minipage}{0.45\textwidth}
        \centering
        \includegraphics[width=0.64\textwidth]{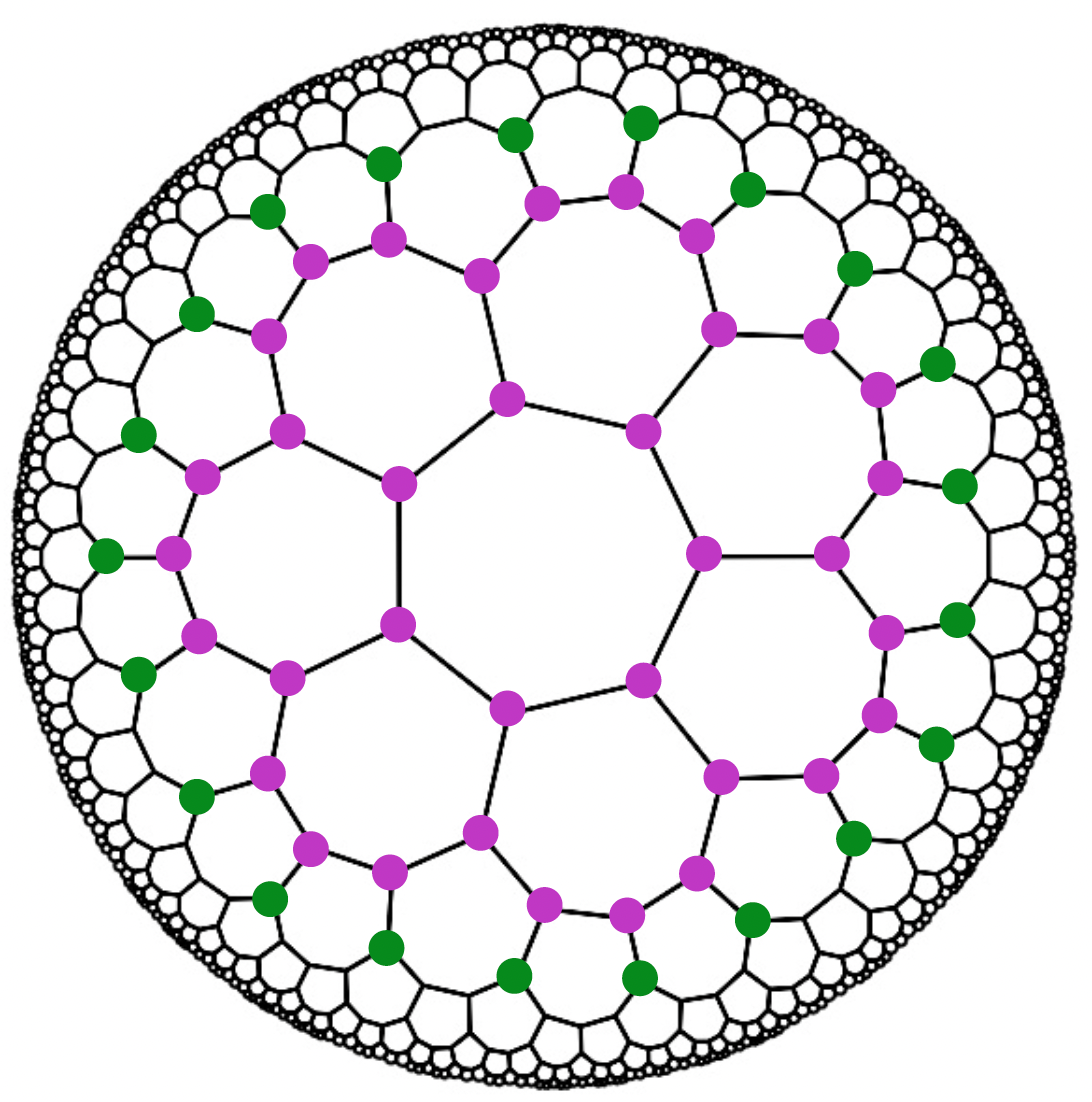} 
        \put(-73,61){$\origin$}
        \put(-69,66){$\bullet$}
        \captionof{figure}{Example of regular ball $B_1(\origin)$ in pink and the green vertices are indicating the perimeter $\partial_e B_1(\origin)$. $B_1(\origin)$ is obtained as the union of two layers, $L_0$ and $L_1$. } \label{fig:in1}
    \end{minipage}
    \hfill
    \begin{minipage}{0.45\textwidth}
        \centering
        \includegraphics[width=0.64\textwidth]{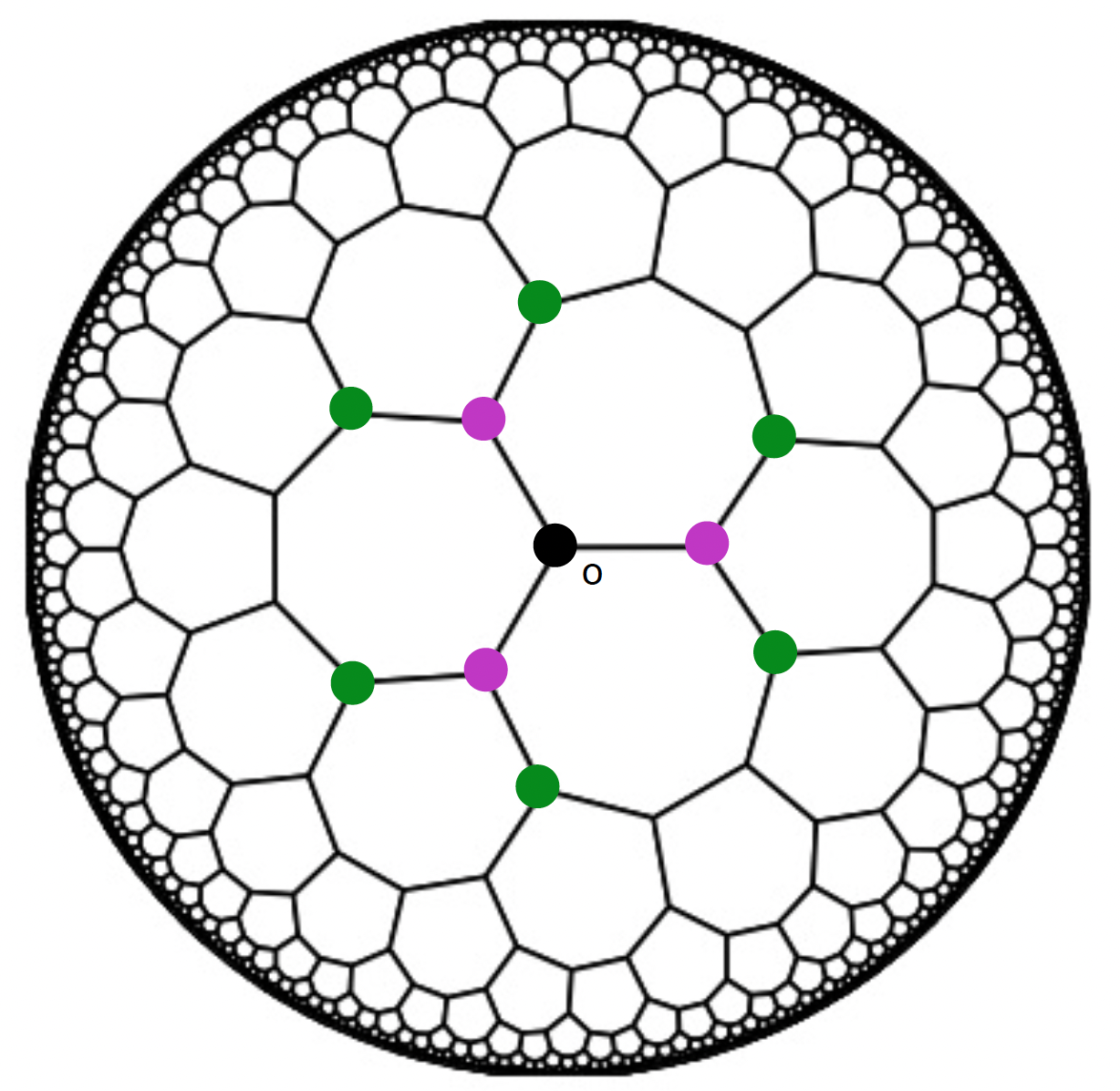} % second figure itself
        \captionof{figure}{Example of a combinatorial ball $\mathcal{B}_1(\origin)$ in pink and the green vertices are indicating the perimeter $\partial_e \mathcal{B}_1(\origin)$.}\label{fig:in2}
    \end{minipage}
\vspace{0.3cm}

Our main result is the solution of the isoperimetric problem for any fixed number of vertices $n \in \mathbb{N}$.
We prove that, for every $n$, the sub-graph minimizing the perimeter among all sub-graphs of $n$ vertices is either precisely a regular ball $B_r(\origin)$ or, when 
$n$ does not coincide with the exact cardinality of such a ball, the optimal shape can be obtained by adding to the ball strips of vertices belonging to the subsequent layers in a particular way which will be defined formally later.  

This result holds for every $N\in \mathbb{N}$ and is proved directly using purely geometric and combinatorial arguments within the hyperbolic tiling. To the best of our knowledge, this is the first explicit and general proof of this type of isoperimetric behavior in the discrete hyperbolic setting. 

\begin{thm}[\textsc{Shapes of minimal perimeter}]
{\it For any fixed volume $N\in \mathbb{N}$, we characterize an explicit set of shapes with strict minimal perimeter $\mathcal{M}_N$.}
\end{thm}

\noindent 
An immediate consequence of this characterization is the computation of the Cheeger constant of the tiling, which is given by the limit as $N \to \infty$ of the ratio between the perimeter and the area (i.e., number of vertices) of the optimal region.

\begin{thm}[\textsc{H{\"a}ggstr{\"o}m--Jonasson--Lyons constant via limits of explicit minimal shapes}]
{\it Let $i_e(\mathcal{L}_{p,q})$ be the Cheeger constant computed in \citep[Theorem 4.1]{haggstrom2002explicit}, then we have 
\[
i_e(\mathcal{L}_{p,q})=\lim_{|\mathscr{M}|\rightarrow \infty} \frac{|\partial_e \mathscr{M}|}{|\mathscr{M}|},
\]
where $\mathscr{M}\in \mathcal{M}_N$.}
\end{thm}

\noindent
The resulting value coincides with that found in the existing literature for the planar regular graphs with regular dual, derived through alternative techniques, such as analytic, spectral, or probabilistic methods, but here it is recovered via a fully geometric and discrete approach. Our contribution thus offers a new combinatorial interpretation of the Cheeger constant, highlighting the deep connection between the radial structure of hyperbolic geometry and the asymptotic isoperimetric behavior.
A recent interesting work \cite{rol} in this direction considers hyperbolic formulas on animals on hyperbolic lattices. More precisely, the authors (among other results) develop formulas for the minimal perimeter of animals which consist of union of tiles rather than vertices. Their formulas are valid for all $n$-sized tile-shaped animals and contain an explicit part and some error bounds. For some specific value of $n$, which correspond geometrically to the area of our regular balls $B_r(\origin)$, the shape with minimal perimeter coincide, but in generally they do not. 

Let us further remark that in \citep[Proposition 4.6]{haggstrom2002explicit}, the authors elegantly prove that for any set which can be constructed recursively from a subset of vertices $K_0$ the ratio between the perimeter and the area of such sets converges to the Cheeger constant. Our result is thus an extension of the one presented in \cite{haggstrom2002explicit}, since it considers general minimal sets that cannot be described using the construction method outlined there, see the discussion at \ref{thm:inf} for further details.

\noindent
The structure of the paper is as follows. In Section \ref{sec:def} we will define all objects and fix notation, Section \ref{sec:res} contains all results which are proven in Section \ref{sec:proofs}.

\section{Definitions and notations}\label{sec:def}

Consider a general undirected graph $G=(V,E)$ and let $A \subset V$ be a subset of vertices. We denote by $|A|$ the cardinality of $A$ and by $A^c=V \setminus A$ the complement of $A$ in $V$. Two vertices are connected if there exists an edge between them. The set $A$ is called \textit{connected}, if for each $v,u \in A$ there exists a sequences of connected vertices in $w_1,...,w_n \in A$ such that $w_1=v$ and $w_n=u$. The {\it perimeter} $|\partial_e A|$ of the set $A$ is the cardinality of the external boundary of $A$, defined as
\begin{align}
|\partial_e A|\coloneqq |\{ (v,w) \in E \, | \, v \in A, \, w\not \in A\}|.
\end{align}

\begin{defn}\label{def:cheeger_constant}
We define the Cheeger constants for finite and infinite graphs.
\begin{itemize}
\item[(i)] (Finite graphs)
The Cheeger (or isoperimetric) constant $i_m$, resp. the Cheeger geometric constant $i^g_m$, are defined as
\begin{align}
    i_m(G)=\min_{\substack{A \subset V, \, \\ |A|=m}} \frac{|\partial_e A|}{|A|}
    \qquad \text{and} \qquad 
    i_m^g(G)=\min_{\substack{A \subset V, \, \\ |A|=m}} \frac{|\partial_e A|}{vol(A)},
\end{align}
where $vol(A)$ is the sum of the degrees of the vertices in $A$, i.e. $vol(A)=\sum_{v \in A}deg(v)$.
\item[(ii)] (Infinite graphs)
The Cheeger constant of $G$ is defined as follows
\begin{align}\label{def:isoperimetric_number}
    i_e(G)= \inf_{\substack{A \subset V, \, \\ 0<|A| < \infty}} \frac{|\partial_e A|}{|A|}.
\end{align}
\end{itemize}
\end{defn}

\noindent
We will define the hyperbolic lattice borrowing the construction from \cite{RNO}.
\noindent
Let $\{p,q\}$ be two positive integers such that $\frac{1}{p}+\frac{1}{q} <\frac{1}{2}$. $\mathcal{G}_{p,q}$ is the {\it Fuchsian group} defined as follows
\begin{equation}\label{eq:fuchs}
\mathcal{G}_{p,q} \coloneqq \langle a,b| a^q, b^p, (ab)^2\rangle
\end{equation}
where $a$ denotes the rotation around a given lattice point over an angle $\alpha=2\pi/q$ and $b$ a rotation around the center of an adjacent face over an angle $\beta =2\pi/p$. The rotation is defined w.r.t.~the hyperbolic metric (remark that we fix the scalar curvature to $-1$ with this choice)
\[
ds^2 = 4\frac{(dx^2+dy^2)}{(1-x^2-y^2)^2}.
\]
The group $\mathcal{G}_{p,q} \subset PSU(1,1)$ is a subgroup of the group of isometries of the unit disc in the complex plane. A representation $\rho$ of $a,b$ can be defined in the following way:
\[
\begin{split}
\rho(b) & =  \pm  \begin{pmatrix} e^{i\beta/2} & 0 \\ 0 & e^{-i\beta/2}\end{pmatrix} \\
\rho(a) &= \pm \frac{1}{1-r^2}\begin{pmatrix}e^{i\alpha/2}-r^2e^{-i\alpha/2} & -r(e^{i\alpha/2}-e^{-i\alpha/2} ) \\
r(e^{i\alpha/2}-e^{-i\alpha/2} ) & e^{-i\alpha/2}-r^2e^{i\alpha/2} 
\end{pmatrix}
\end{split}
\]
and $r^2=\frac{\cos((\alpha+\beta)/2)}{\cos((\alpha-\beta)/2)}$.

\begin{defn}
The hyperbolic lattice $\mathcal{L}_{p,q} = (\mathcal{V}, \mathcal{E})$ is defined as follows. Choose $z=\origin$ as the center of the fundamental face and choose $z=r$ an adjacent lattice point. The vertices are generated by words $a,b$ acting on $z=r$. Edges are drawn between points as a result of the action of $g_1,g_2 \in \mathcal{G}_{p,q}$ on $r$ if $g_2=g_1a^nba^m$ for some $m,n=0,\ldots,p-1$.
\end{defn}
\noindent
Note that the faces are equilateral and the lattice $\mathcal{L}_{p,q}$ can be naturally embedded in the Poincar\'e disc; see Figures \ref{fig:L73}, \ref{fig:L37}.

In the following, we will describe an alternative way to construct $\mathcal{L}_{p,q}$ in terms of layers.
Let $k\in \mathbb{N} \cup \{0\}$, we define the $k$-th \emph{layer}, denoted by $L_k$, as the set of vertices in $\mathcal{V}$ constructed in the following way. 

The zero layer $L_0$ is the set of $p$ vertices in the unique face of $\Lpq$ containing $\origin$. 
The first layer $L_1$ is the set of vertices (not in $L_0$) of all the faces which are adjacent to the face containing $\origin$ (including those sharing just a vertex). For $k \geq 2$, we define $L_k$ iteratively as the set of vertices 
of all the faces which are adjacent to the face containing the vertices in $L_{k-1}$. 
\begin{defn}\label{def:regular}
    We define the \emph{regular ball} of radius $k$ centered at $\origin$ as
    \begin{align}
        B_k(\origin)\coloneqq \bigcup_{l=0}^k L_l
    \end{align}
    See Figure \ref{fig:layers} for an example. 
\end{defn}
Remark that, for all $k\geq 2$, $B_k(\origin)$ cannot be simply written as the set of vertices within some fixed graph distance from a given vertex. If the layer depends on a different reference point $x \not \equiv \origin$ (such as the middle point of a tile), we will write $L_k(x)$.

    \begin{minipage}{0.45\textwidth}
        \centering
        \includegraphics[width=0.64\textwidth]{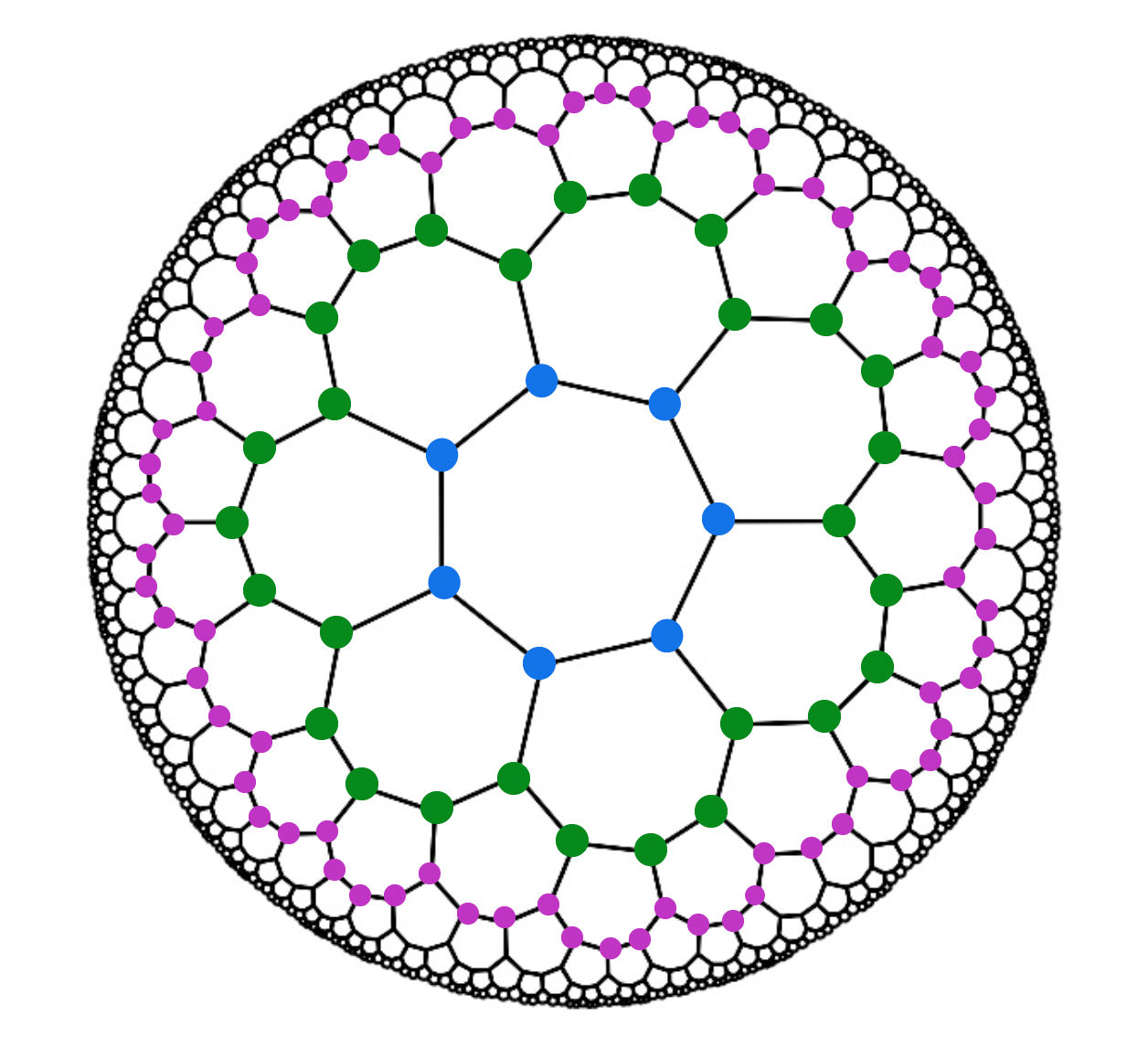} 
        \put(-73,56){$\origin$}
        \put(-69,61){$\bullet$}
       \captionof{figure}{Example of layers $L_0$ (\textcolor{blue}{blue}), $L_1$ (\textcolor{dgreen}{green}), $L_2$ (\textcolor{magenta}{pink}). Their union is the regular ball $B_2(\origin)$.}\label{fig:layers}
\end{minipage}\hfill
    \begin{minipage}{0.45\textwidth}
        \centering
        \includegraphics[width=0.58\textwidth]{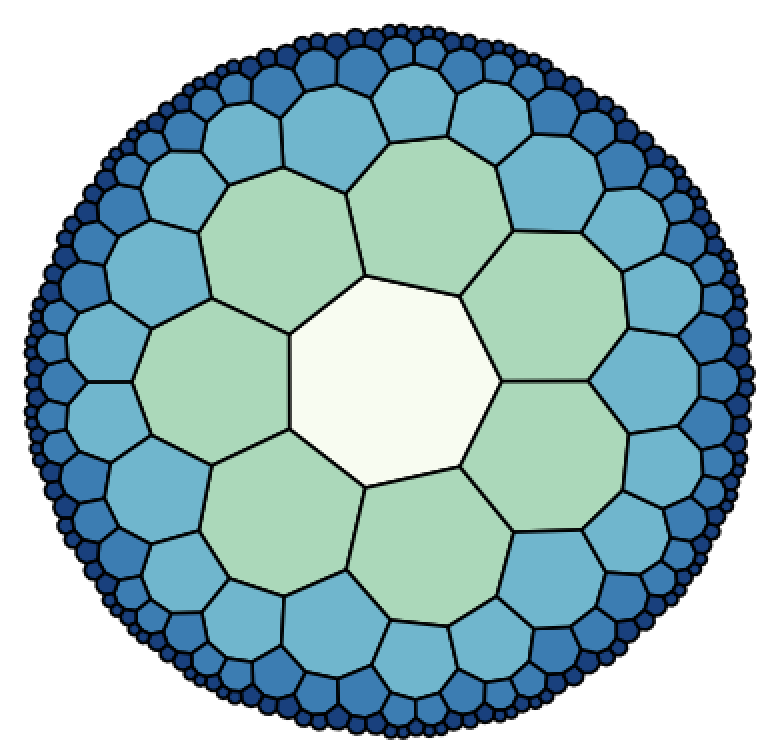} % second figure itself
        \captionof{figure}{Example of balls defined via tilings in \cite{plotnick1987growth} in shades of {\color{blue}blue}.}.\label{fig:comb_ball}
    \end{minipage}

\vspace{0.3cm}
\noindent
Clearly, $\mathcal{V}=\cup_{k\geq 0} L_k$ and $L_j \cap L_k = \emptyset $ if $j\neq k$. 
Let $n$ be a fixed non-negative integer and consider first $p\geq 4$. We will distinguish between two types of vertices in each $L_n$ in a recursive way, those which are connected to vertices from the previous layer $L_{n-1}$, denoted by $I_{n;p,q}$ (from ``internal vertices'') and those which are not, denote by $E_{n;p,q}$ (from ``external vertices''), see also the construction  in \cite{RNO}. More precisely,
\begin{itemize}
    \item $I_{n;p,q} \coloneqq \{v \in L_n \, | \, \, \exists \, w \in  L_{n-1}: (v,w)\in \mathcal{E} \}$;
    \item $E_{n;p,q} \coloneqq L_n\setminus I_{n;p,q}$,
\end{itemize}
so that $|I_{n;p,q}|+|E_{n;p,q}|=|L_n|$. See Figure \ref{fig:I73} for an example. For $p=3$ (triangulations) the situation is special since all vertices in a layer are connected to the previous layer. In this case, we will define the following subsets for $n\geq 2$:
\begin{itemize}
    \item ${I}'_{n;3,q} \coloneqq \{v \in L_n \, | \, \, \exists \, w \in  L_{n-1}: (v,w)\in \mathcal{E} \}$;
    \item ${I}''_{n;3,q} \coloneqq \{v \in L_n \, | \, \, \exists \, w_1, w_2 \in  L_{n-1}, \, w_1 \neq w_2: (v,w_1), (v,w_2) \in \mathcal{E} \}$,
\end{itemize}
see Figure \ref{fig:I37}.
In this case, $|I'_{n;3,q}|+2|I''_{n;3,q}|=|L_n|$. The first set consists of the 3 points in the first triangle $L_0$. 

\noindent We will add the dependence on a reference point $x$ in the definition of the sets $I_{n;p,q}(x),$ $E_{n;p,q}(x),$ $ I'_{n;3,q}(x)$ resp.~$I''_{n;3,q}(x)$ if $x\neq \origin$.

    \begin{minipage}{0.45\textwidth}
        \centering
         \includegraphics[width=0.64\textwidth]{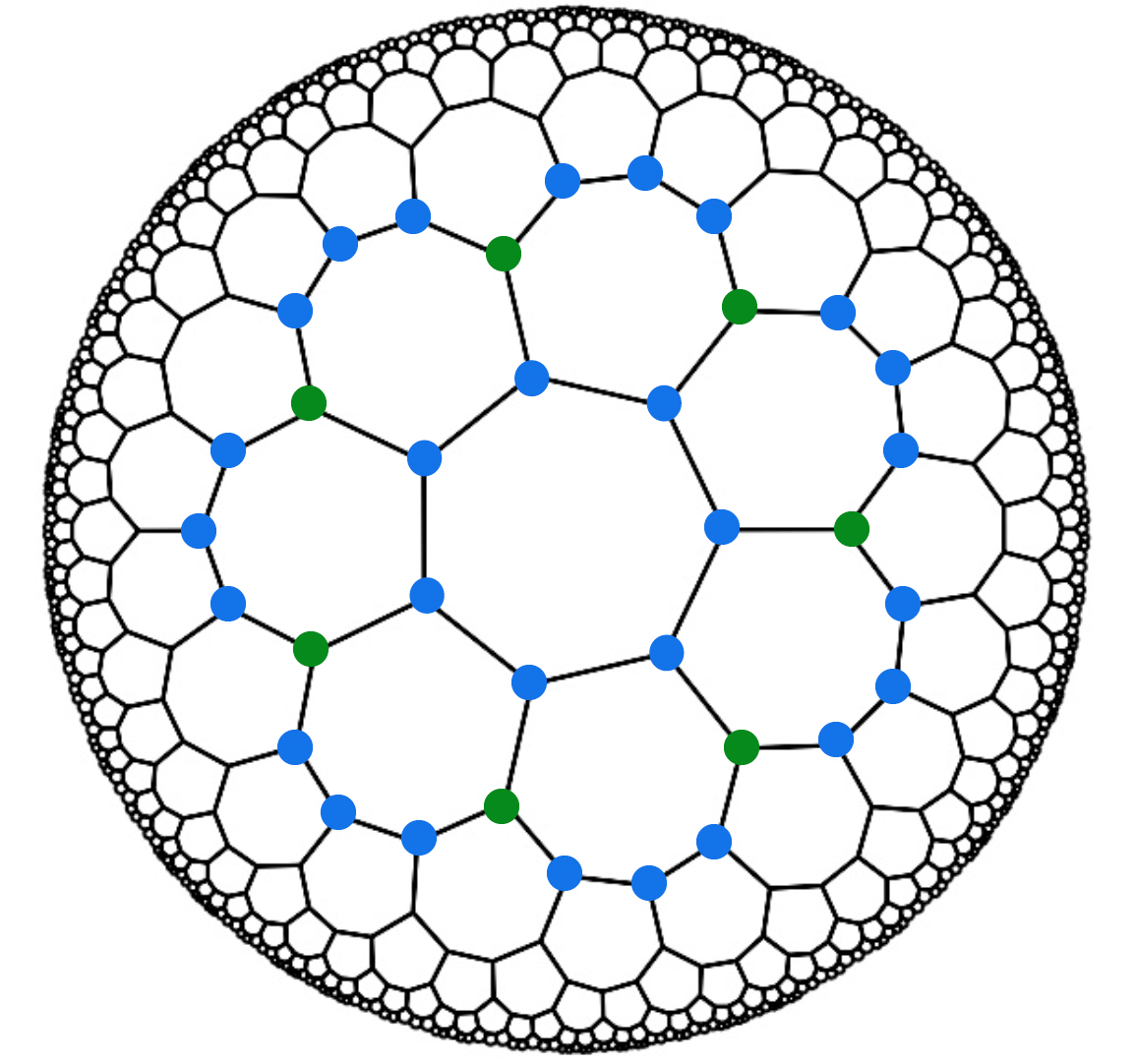} 
         \put(-73,55){$\origin$}
         \put(-70,61){$\bullet$}
        \captionof{figure}{Example of the sets $E_{0;7,3}$ and $E_{1;7,3}$ in \textcolor{blue}{blue} and $I_{1;7,3}$ in \textcolor{dgreen}{green}.} \label{fig:I73}
    \end{minipage}\hfill
    \begin{minipage}{0.45\textwidth}
        \centering
        \includegraphics[width=0.61\textwidth]{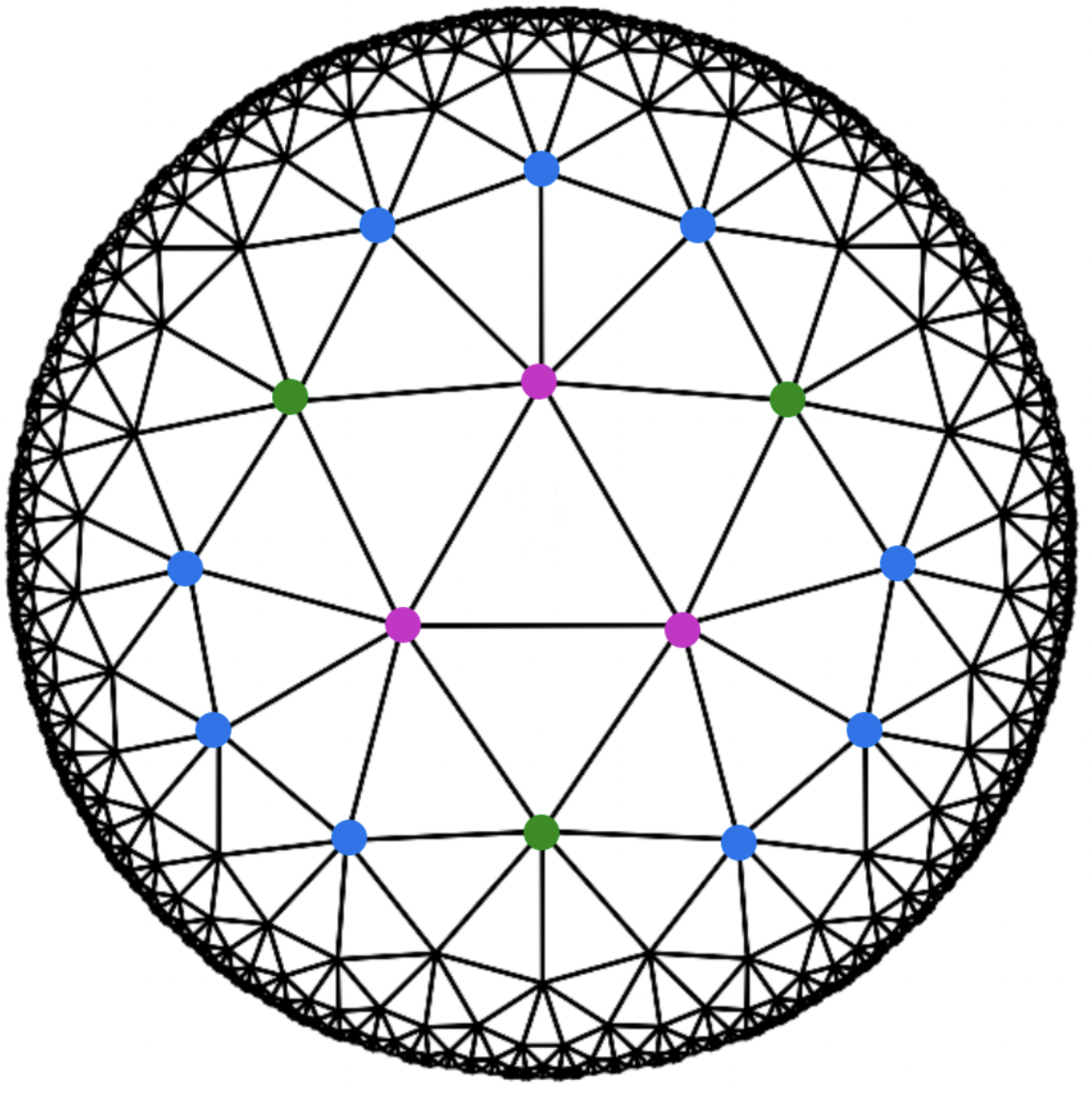} %
        \put(-67,68){$\origin$}
         \put(-67,62){$\bullet$}
        \captionof{figure}{Example of the sets $I'_{1;3,7}$ in \textcolor{blue}{blue}, $I''_{1;3,7}$ in \textcolor{dgreen}{green} and the set $L_0$ in \textcolor{magenta}{pink}. }\label{fig:I37}
    \end{minipage}

\begin{remark}
    Note that the ``combinatorial balls" $\mathcal{B}_n$ constructed in \cite{plotnick1987growth} using the word norm on the Fuchsian group $\mathcal{G}_{p,q}$ are unions of tiles. If we look at the corresponding combinatorial ball in the dual lattice $\mathcal{B}'_n$ of radius $n$ from the origin $\origin$ defined in \cite{haggstrom2002explicit}, then $|\partial_e \mathcal{B}'_n| < |\partial_e B_{n}|$ and $|\partial_e \mathcal{B}'_n|/|\mathcal{B}'_n| > |\partial_e B_{n}|/|B_{n}|$.  This is the reason why their combinatorial balls, which look ``spiky'', will not realize the isoperimetric constant (see Remark 4.3 in \cite{haggstrom2002explicit}) and ours, which look ``roundy'', will. For  an example comparison we refer to Figures \ref{fig:ball_h} vs \ref{fig:ball_h2}. 
\end{remark}

        \begin{minipage}{0.45\textwidth}
        \centering
        \includegraphics[width=0.64\textwidth]{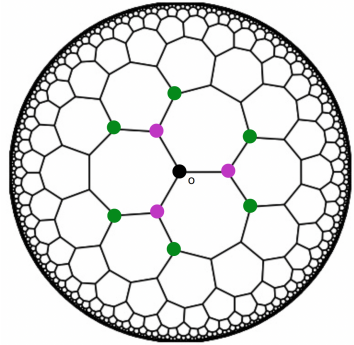} % second figure itself
        \captionof{figure}{Example of $\mathcal{B}_1(\origin)$ in pink and the green vertices are indicating $\partial_e \mathcal{B}_1(\origin)$. We have  $|\partial_e \mathcal{B}_1(\origin)|/|\mathcal{B}_1(\origin)|= 2$.}\label{fig:ball_h2}
    \end{minipage}
    \hfill
    \begin{minipage}{0.45\textwidth}
        \centering
        \includegraphics[width=0.64\textwidth]{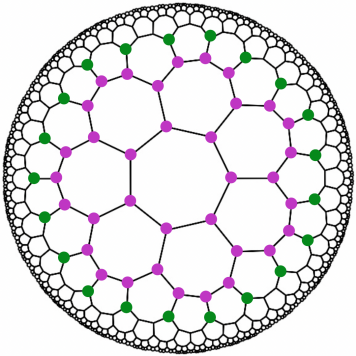} % first figure itse
        \put(-74,60){$\origin$}
        \put(-69,65){$\bullet$}
        \captionof{figure}{Example of $B_1(\origin)$ in pink and the green vertices are indicating $\partial_e B_1(\origin)$. We have  $|\partial_e B_1(\origin)|/|B_1(\origin)|= 3/5$.} \label{fig:ball_h}
    \end{minipage}

\vspace{0.3cm}
\noindent
For any connected set of vertices $A$, we will define the smallest regular ball containing $A$ and the largest ball contained in $A$. 

\begin{defn}\label{def:bmax}
Let $A \subset \mathcal{V}$ be any connected set of vertices (order them lexicographically $\preceq$), and set $|A|=N$. We define $B_{A, max}$ the largest ball contained in $A$ as follows. If the vertices in $A$ do not form a polygon (or a tile) then set $B_{A,max}=\emptyset$. Otherwise, there exist $M \leq N-p$ layers $L_1(x_i)$ in $A$ with middle points $x_1,\ldots,x_M$. Let
\[
\{\overline{x}, \overline{m}\} \coloneqq \underset{l\in \mathbb{N}}{arg\,max} \, \underset{\{x_1,\ldots,x_M\}}{arg\, max} \left\{\bigcup_{k=0}^{l} L_k(x_i): \bigcup_{k=0}^{l} L_k(x_i) \subset A \right\}.
\]
Then $B_{A, max}$ is defined as
\[
B_{A, max}(\overline{x}) \coloneqq  \bigcup_{k=0}^{\overline{m}} L_k(\overline{x}).
\]
\end{defn}
\noindent Note that this ball is not uniquely defined. In the case that several sets $B_{A,max}(\overline{x})$ can be constructed in this way we take the regular ball with the smallest reference point in lexicographic order.

\begin{defn}\label{def:bmin}
Given $\overline{x}$ as in Definition \ref{def:bmax}, 
let 
\[
\overline{M}= \underset{l\in \mathbb{N}}{arg\, min} \left \{  \bigcup_{k=0}^{l} L_k(\overline{x}): \bigcup_{k=0}^{l} L_k(\overline{x}) \supset A\right \}.
\]
We define the minimal ball containing $A$, by
\[
B_{A,Min}(\overline{x})=  \bigcup_{k=0}^{\overline{M}} L_k(\overline{x}).
\]
\end{defn}
\noindent
An example can be found in Figure \ref{fig:balls}.

\begin{minipage}{0.9\textwidth}
\centering
\includegraphics[scale=0.25]{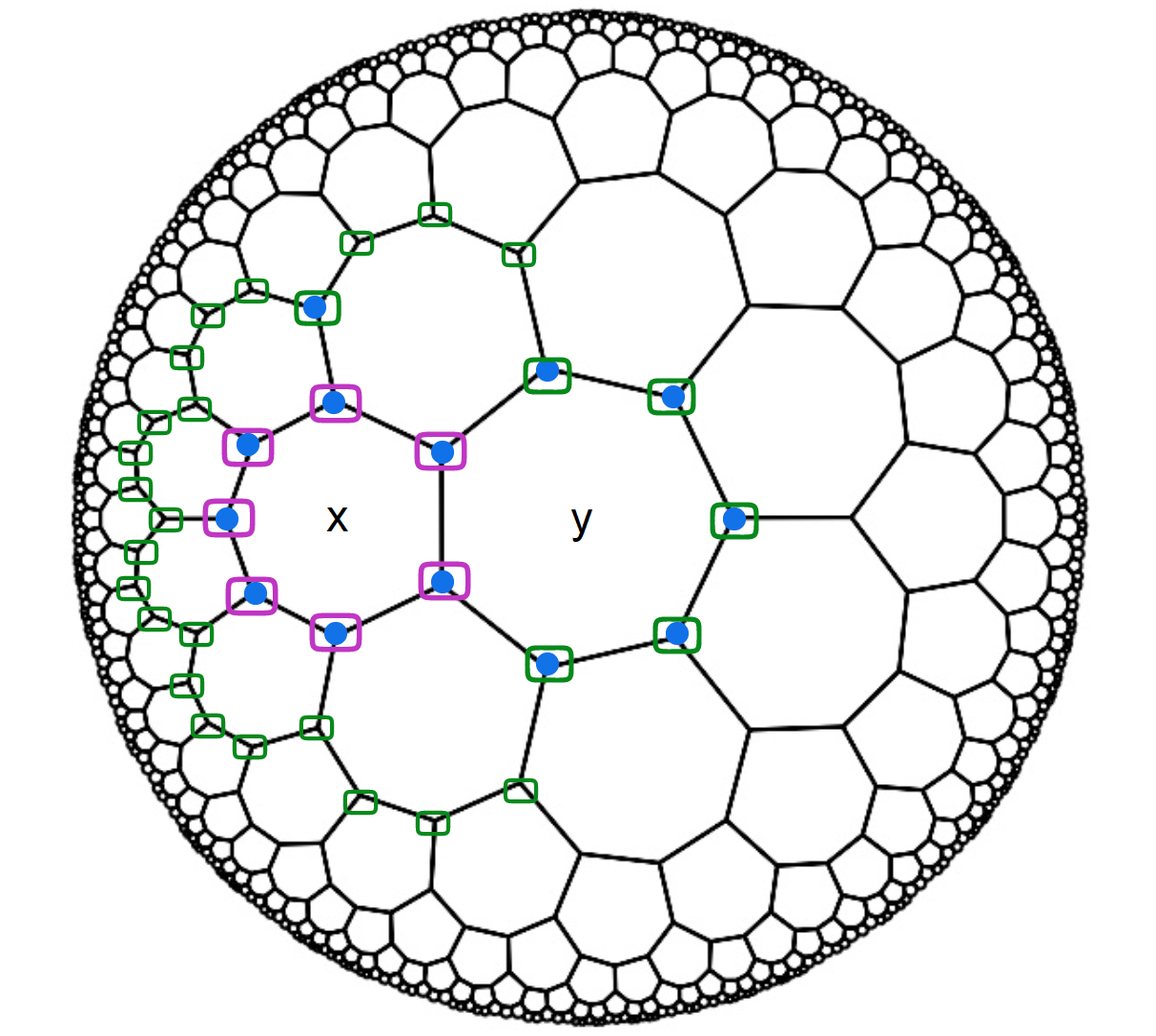}
\captionof{figure}{Example of the set of connected points $A$, displayed as \textcolor{blue}{blue} points. There are two tiles present in $A$. One is centered at $x$ and one at $y$ with $x\preceq y$. The ball $B_{A,max}(x)$ is displayed by the \textcolor{magenta}{pink} circles and and the layer $B_{A,Min}(x)\setminus B_{A,max}(x)$ by the \textcolor{dgreen}{green} circles.}\label{fig:balls}
\end{minipage}

\vspace{0.3cm}

Furthermore we will characterize the vertices $v$ in the layers in $B_{A,Min}(\overline{x})\setminus B_{A,max}(\overline{x})$ as empty if $v\notin A$ and occupied if $v\in A$. A sequence of consecutive empty, resp.~occupied, vertices in the same layer is called a \textit{strip}.
In particular, we define a peculiar strip $S$ as follows.
\begin{defn}
Let $N \in \mathbb{N}$ and and let $A$ be a connected set $A\subset \mathcal{V}$ such that $|A|=N$, $B_{A,Min}(\overline{x})\setminus B_{A,max}(\overline{x}) \subset L_{\overline{m}+1}$ and with a sequence of connected vertices in $L_{\overline{m}+1} \cap A$. We denote this sequence by $S$ and by $o_{max}$ the maximal possible number of vertices $v \in S$ which are also in $I_{\overline{m}+1;p,q}$ for $p\geq 4$, resp. also in $I''_{\overline{m}+1;3,q}$ for $p=3$.
\end{defn}

Finally, for a fixed $N \in \mathbb{N}$ and connected set $A\subset \mathcal{V}$ such that $|A|=N$ we will define sets $\mathcal{M}_N$ which will turn out to be the set of \emph{minimal shapes} which have minimal perimeter.

\begin{defn}\label{def:minset}
Fix $N\in \mathbb{N}$ and let $A$ be a connected set $A\subset \mathcal{V}$ such that $|A|=N$. We call 
\[
\mathcal{N}_e=(B_{A,Min}(\overline{x})\setminus B_{A,max}(\overline{x}))\cap A^c, \, \, \text{ resp. } \mathcal{N}_o=(B_{A,Min}(\overline{x})\setminus B_{A,max}(\overline{x}))\cap A
\]
the set of empty, resp. occupied vertices, in $B_{A,Min}(\overline{x})\setminus B_{A,max}(\overline{x})$. Let $s_e$ denote the number of empty strips in the layers of $B_{A,Min}(\overline{x})\setminus B_{A,max}(\overline{x})$.
Then $A\in \mathcal{M}_N$ if and only if it satisfies one of the following conditions:
\begin{itemize}
\item[(C1)] $s_e=0$ and $B_{A,Min}(\overline{x}) = B_{A,max}(\overline{x})$.
\item[(C2)] $s_e\geq 1$ and the set $\mathcal{N}_o$ contains precisely $o_{max}+(s_e-1)$ vertices $v$ such that $v\in \bigcup_{r=\overline{m}+1}^{\overline{M}} I_{r;p,q}$ resp. $v\in \bigcup_{r=\overline{m}+1}^{\overline{M}} I''_{r;3,q}$.
\end{itemize}
\end{defn}
We construct some examples of sets in $\mathcal{M}_{17}$ in Figures \ref{fig:min1} and \ref{fig:min2}. 

\vspace{0.3cm}
    \begin{minipage}{0.45\textwidth}
        \centering
        \includegraphics[width=0.64\textwidth]{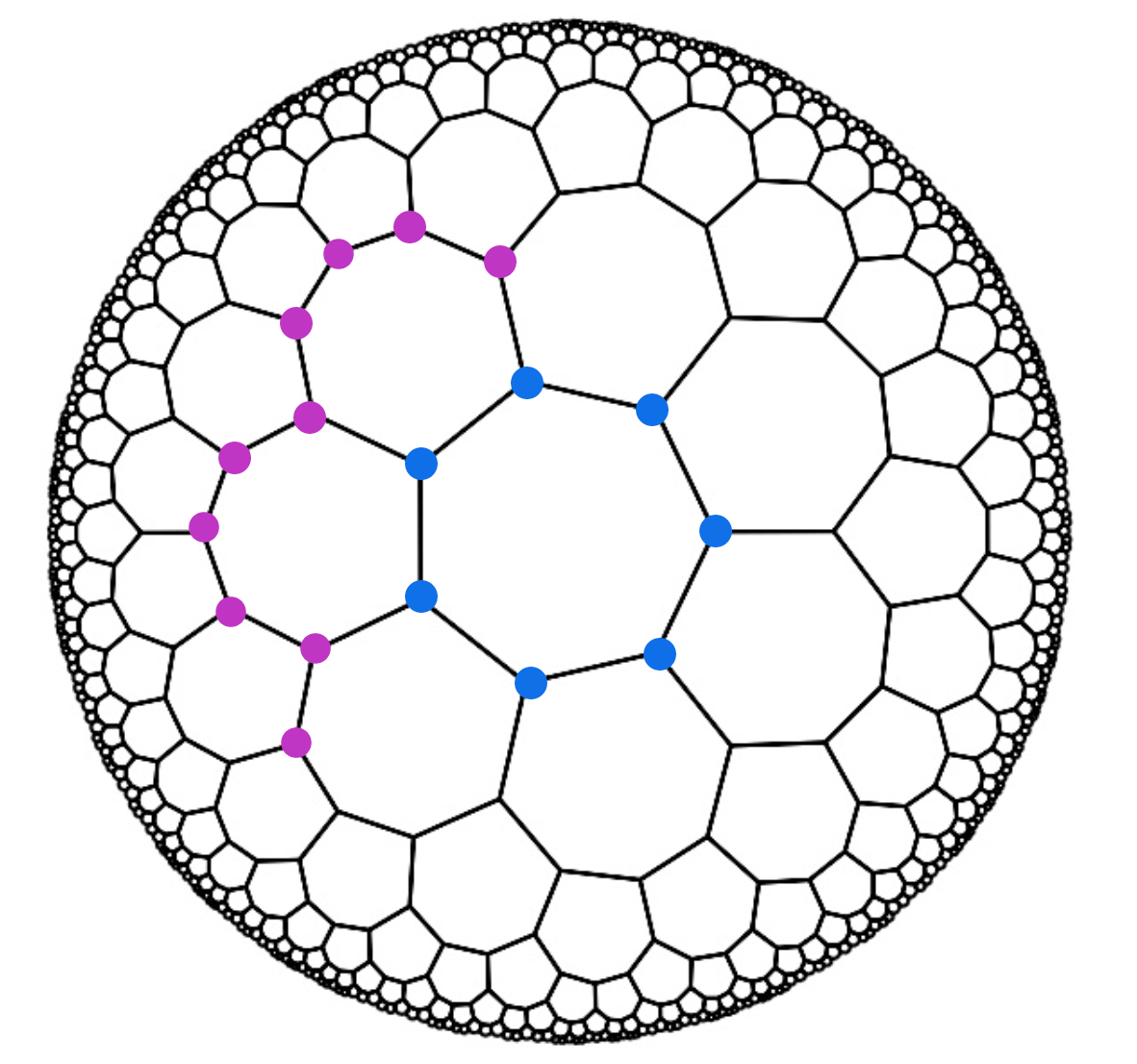} 
        \put(-75,55){$\origin$}
        \put(-70,60){$\bullet$}
        \captionof{figure}{Example of $A\in \mathcal{M}_{17}$ with $|\mathcal{N}_o|=10$, $|\mathcal{N}_e|=18$. We have $o_{max}=3$, $s_e=1$, $\overline{m}=0$, and three vertices are in $I_{1;7,3}$.}\label{fig:min1}
    \end{minipage}
    \hfill
    \begin{minipage}{0.45\textwidth}
        \centering
        \includegraphics[width=0.64\textwidth]{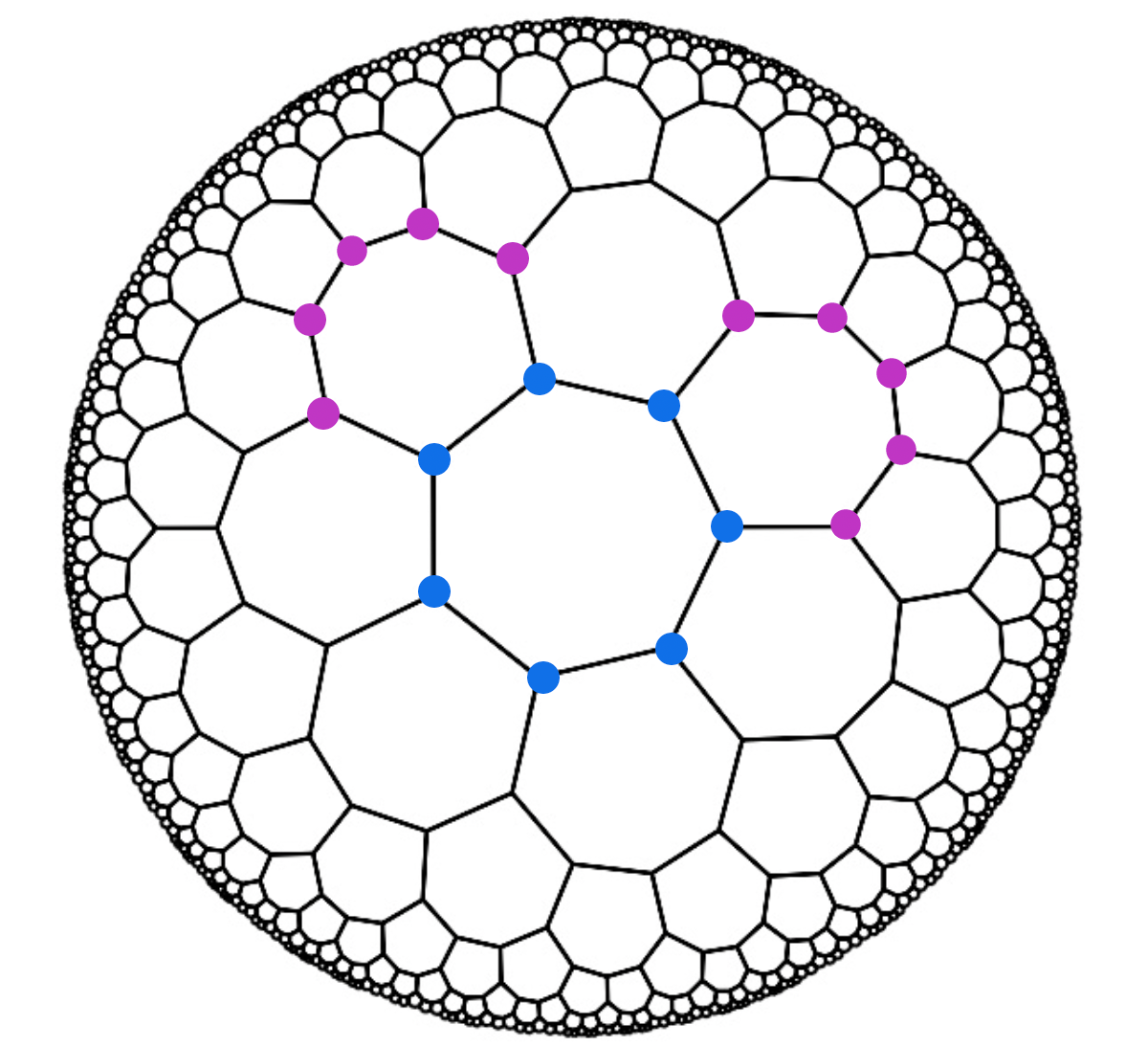} 
        \put(-75,54){$\origin$}
        \put(-70,59){$\bullet$}
        \captionof{figure}{Example of $A\in \mathcal{M}_{17}$ with $|\mathcal{N}_o|=10$, $|\mathcal{N}_e|=18$. We have $o_{max}=3$, $s_e=2$, $\overline{m}=0$, and $3+(2-1)$ vertices are in $I_{1;7,3}$.}\label{fig:min2}
    \end{minipage}

\vspace{0.3cm}
We observe that regular balls satisfy this definition, so they belong to $\mathcal{M}_N$ for some $N\in \mathbb{N}$.

\section{Results}\label{sec:res}
In this section, we present the main results of this paper. 
The first result is the explicit characterization of the sets of minimal perimeter for a fixed volume $N \in \mathbb{N}$ for any hyperbolic lattice $\mathcal{L}_{p,q}$.

\begin{theorem}[Minimal perimeter]\label{minimizers}
Given $N \in \mathbb{N}$, let $\mathcal{S}_N=\{A \subset \mathcal{V}: |A|=N\}$ be the set of all subsets of vertices with size $N$. 
Then for any $A\in \mathcal{S}_N\setminus \mathcal{M}_N$:
\[
|\partial_e A| > |\partial_e \mathscr{M}|, 
\]
where $\mathscr{M}\in \mathcal{M}_N$. Moreover, given a finite graph $G$ with vertex set $B_N(\origin)$, the Cheeger and geometric Cheeger constants are equal to
\begin{align}
i_N \left(G \right)=\frac{|\partial_e \mathscr{M}|}{N} 
    \qquad \text{and} \qquad 
i_N^g \left( G\right)=\frac{|\partial_e \mathscr{M}|}{qN},\notag
\end{align}
where $i_N, i_N^g$ are defined in Definition \ref{def:cheeger_constant}.
\end{theorem}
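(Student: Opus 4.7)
The starting point is the elementary identity
\[
qN \;=\; 2\,|E(A)| \;+\; |\partial_e A|,
\]
valid for every $A \subset \mathcal{V}$ with $|A|=N$, since each vertex of $\mathcal{L}_{p,q}$ has degree $q$; here $|E(A)|$ denotes the number of edges with both endpoints in $A$. Minimizing $|\partial_e A|$ is thus equivalent to maximizing $|E(A)|$, and to prove Theorem \ref{minimizers} it suffices to show that every $A \in \mathcal{S}_N \setminus \mathcal{M}_N$ has strictly fewer internal edges than any $\mathscr{M}\in\mathcal{M}_N$.

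I would argue in three steps. First, reduce to the connected case: if $A$ has several components, apply an isometry in $\mathcal{G}_{p,q}$ to one of them so that it becomes adjacent to another; by vertex-transitivity this preserves $|A|$ and creates at least one new internal edge, so a disconnected $A$ cannot be a minimizer. Second, assume $A$ connected and consider $B_{A,max}(\overline{x})$. If some vertex $v \in B_{A,max}(\overline{x}) \setminus A$ existed, then by the defining property of $B_{A,max}$ all $q$ of its neighbors lie in $A$, while any fringe vertex $v' \in B_{A,Min}(\overline{x}) \setminus B_{A,max}(\overline{x})$ has strictly fewer neighbors in $A$; swapping $v$ for $v'$ strictly increases $|E(A)|$, contradicting minimality. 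Hence any minimizer satisfies $B_{A,max}(\overline{x}) \subseteq A$, which is exactly condition (C1) when the two balls coincide.

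The third and most delicate step is to optimize the placement of the fringe vertices $\mathcal{N}_o$ in the single extra layer $L_K$ with $K=\overline{m}+1$. Every $v\in\mathcal{N}_o$ contributes one edge to $L_{K-1}$ precisely when $v\in I_{K;p,q}$ (or $v\in I''_{K;3,q}$ in the $p=3$ case), and otherwise only contributes edges to its occupied neighbors inside $L_K$. A direct count shows that if the occupied region in $L_K$ forms $s_o$ strips of total length $|\mathcal{N}_o|$, the internal edges added equal
\[
|\mathcal{N}_o| \;-\; s_o \;+\; \#\bigl\{v\in\mathcal{N}_o : v\in I_{K;p,q}\bigr\}.
\]
Since strips alternate cyclically along $L_K$, $s_o = s_e$ as soon as $s_e\geq 1$. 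Using the layer recursions of \cite{RNO, moran1997growth} for $|I_{n;p,q}|$ and $|E_{n;p,q}|$, one verifies that each extra empty strip allows the placement of exactly one additional vertex in $I_{K;p,q}$, so the quantity above is constant across arrangements matching (C2) and strictly smaller for any other configuration. Combining the three steps yields $|\partial_e A| > |\partial_e \mathscr{M}|$ for every $A\in\mathcal{S}_N\setminus\mathcal{M}_N$.

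The Cheeger formulas are then immediate: since $|B_N(\origin)|$ grows exponentially in $N$, any $\mathscr{M}\in\mathcal{M}_N$ fits in the interior of $B_N(\origin)$ with $\mathrm{vol}(\mathscr{M})=qN$, so the first part of the theorem gives $i_N(G)=|\partial_e\mathscr{M}|/N$ and $i_N^g(G)=|\partial_e\mathscr{M}|/(qN)$. The main obstacle will be the combinatorial bookkeeping in Step~3, namely verifying the one-for-one trade between each additional empty strip and one additional $I$-vertex through the RNO layer recursion, and adapting the whole argument to the triangular regime $p=3$ where $I'_{n;3,q}$ and $I''_{n;3,q}$ replace $I_{n;p,q}$ and $E_{n;p,q}$.
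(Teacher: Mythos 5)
Your opening reduction via the handshake identity $qN=2|E(A)|+|\partial_e A|$ is correct and is a genuinely different (and cleaner) framing than the paper, which works directly with perimeter differences; and your Step 1 matches the paper's Lemma \ref{lem:not_connected}. However, the core of the argument has real gaps. In Step 2, the set $B_{A,max}(\overline{x})\setminus A$ is empty by Definition \ref{def:bmax} ($B_{A,max}$ is by construction a union of layers contained in $A$), so the statement you are actually trying to prove is that empty strips in layers strictly below $L_{\overline{M}}$ can be eliminated at a profit. For that, your single-vertex swap does not work uniformly: the claim that such a $v$ has all $q$ neighbors in $A$ is false (its neighbors in the same empty strip, and the vertices above it, may also be empty), and even when the swap is favorable the gain can be zero rather than strictly positive. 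The paper itself flags this: for $q=3$ one must fill an \emph{entire} empty strip with an entire occupied strip taken from the last layer (its Case (II) and Case (III)), because moving one vertex does not strictly decrease the perimeter. Your argument as written would therefore fail exactly in the triangular-degree regime.

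Step 3 has two further problems. First, you assume the fringe $\mathcal{N}_o$ sits in a single layer $L_{\overline{m}+1}$, but nothing forces $\overline{M}=\overline{m}+1$: the occupied fringe can span several layers without any of them being complete, and Definition \ref{def:minset} (C2) is stated for that general situation; the paper's Case (III) spends most of its effort on precisely these multi-layer configurations, including the subcase where the last layer does not contain enough occupied vertices to fill an inner empty strip. Second, the decisive combinatorial fact --- that each additional empty strip permits exactly one additional vertex of $\mathcal{N}_o$ in $\bigcup_r I_{r;p,q}$, so that the edge count $|\mathcal{N}_o|-s_o+\#\{v\in\mathcal{N}_o: v\in I\}$ is maximized precisely by the configurations in (C2) and is strictly smaller otherwise --- is asserted (``one verifies that\dots'') rather than proved. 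This is the actual content of the theorem, and the paper establishes it by explicit perimeter bookkeeping such as \eqref{eq:B'} and \eqref{eq:the3.1q3}. Until you supply that verification, for general $s_e$, for fringes spanning several layers, and separately for $p=3$ with $I'$ and $I''$ in place of $I$ and $E$, the proof is incomplete. The final paragraph on $i_N$ and $i_N^g$ is fine once the first part is in place.
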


In the following theorem we will show that our minimal shapes in $\mathcal{M}_N$ realize the Cheeger constant (or isoperimetric constant) computed in \cite[Theorem 4.1]{haggstrom2002explicit},  when $N\rightarrow \infty$.

\begin{theorem}[Realizing the Cheeger constant]\label{thm:inf}
For some $N \in \mathbb{N}$, let $\mathscr{M}\in \mathcal{M}_N$ and let the Cheeger constant be equal to
\[
i_e(\Lpq) =(q-2) \sqrt{1-\frac{4}{(p-2)(q-2)}}.
\]
Then
\begin{align}\label{eq:iso_Lpq}
    i_e(\Lpq) &= \lim_{|\mathscr{M}| \to \infty} \frac{|\partial_e \mathscr{M}|}{|\mathscr{M}|}.
    \end{align}
\end{theorem}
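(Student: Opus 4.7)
The plan splits into a lower bound, which is immediate from the infimum definition of $i_e(\Lpq)$, and an upper bound, which uses the explicit structure of minimizers from Theorem \ref{minimizers}. By Definition \ref{def:cheeger_constant}(ii) every finite nonempty $\mathscr{M} \subset \mathcal{V}$ satisfies $i_e(\Lpq) \le |\partial_e \mathscr{M}|/|\mathscr{M}|$, so taking $|\mathscr{M}| \to \infty$ along any sequence in $\bigcup_N \mathcal{M}_N$ gives $i_e(\Lpq) \le \liminf_{|\mathscr{M}| \to \infty} |\partial_e \mathscr{M}|/|\mathscr{M}|$.

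For the upper bound I would first exploit that each combinatorial ball $B_k(\origin)$ is itself a minimizer. Indeed, $B_{B_k,\max} = B_{B_k,\min} = B_k(\origin)$ and the outer layer is empty so $s_e = 0$, meaning condition (C1) of Definition \ref{def:minset} holds and $B_k(\origin) \in \mathcal{M}_{|B_k(\origin)|}$. By Theorem \ref{minimizers} we then have $|\partial_e \mathscr{M}|/|\mathscr{M}| = |\partial_e B_k(\origin)|/|B_k(\origin)|$ for every $\mathscr{M} \in \mathcal{M}_{|B_k(\origin)|}$, so the key task reduces to evaluating the limit of this ratio along the ball subsequence.

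Next I would invoke the Rietman--Nienhuis--Oitmaa layer decomposition \cite{RNO} and the growth analysis of Moran \cite{moran1997growth}. For $p \ge 4$ the triple $(|L_k|, |I_{k;p,q}|, |E_{k;p,q}|)$, respectively the analog with $I'_{k;3,q}$ and $I''_{k;3,q}$ for $p=3$, obeys a coupled linear recurrence whose dominant eigenvalue is
\[
\lambda(p,q) = \frac{(p-2)(q-2)-2 + \sqrt{(p-2)(q-2)\bigl[(p-2)(q-2)-4\bigr]}}{2},
\]
satisfying $\lambda + \lambda^{-1} = (p-2)(q-2) - 2$. Since only edges between $L_k$ and $L_{k+1}$ contribute to $\partial_e B_k(\origin)$, an edge count from the $L_{k+1}$ side expresses $|\partial_e B_k|$ as a linear combination of $|I_{k+1;p,q}|$ and, for $p=3$, $|I''_{k+1;3,q}|$. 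Using $|L_k|, |I_{k;p,q}| \sim C\lambda^k$ and the geometric sum identity $|B_k|/|L_k| \to \lambda/(\lambda-1)$, the ratio $|\partial_e B_k|/|B_k|$ converges to an explicit rational function of $\lambda$, which an algebraic manipulation using the eigenvalue identity recasts as $(q-2)\sqrt{1 - 4/((p-2)(q-2))} = i_e(\Lpq)$.

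The remaining case is $|B_k(\origin)| < N < |B_{k+1}(\origin)|$. Here condition (C2) of Definition \ref{def:minset} characterizes $\mathscr{M}_N$ as $B_k(\origin)$ together with an optimally placed strip $S \subset L_{k+1}$ of size $|S| = N - |B_k(\origin)|$. Applying the identity
\[
|\partial_e (B_k \cup S)| = |\partial_e B_k| + q|S| - 2\,e(S,L_k) - 2\,e(S),
\]
where $e(S,L_k)$ and $e(S)$ denote the number of edges between $S$ and $L_k$ and within $S$ respectively, together with the asymptotic proportions of $I$-type vertices from the recurrence, one shows that $|\partial_e \mathscr{M}_N|/N$ stays sandwiched between the ratios at $|B_k|$ and $|B_{k+1}|$, both of which converge to $i_e(\Lpq)$. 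The hardest step will be the closed-form reduction of the ball-subsequence limit to the H{\"a}ggstr{\"o}m--Jonasson--Lyons expression: this requires simultaneously tracking the limiting proportions of $I$ and $E$ (or $I'$ and $I''$) vertices and exploiting the eigenvalue identity to collapse the resulting rational function of $\lambda$ into the compact radical form. A secondary subtlety is the uniform treatment of $p=3$, whose recurrence and boundary edge counts differ structurally from those for $p\ge 4$.
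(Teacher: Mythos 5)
Your proposal follows essentially the same route as the paper: the lower bound comes from the infimum definition, the ball-subsequence limit is computed via the Rietman--Nienhuis--Oitmaa transfer-matrix recursion (the paper's Lemma 4.3), and intermediate $N$ is handled by writing the perimeter of $B_k(\origin)$ plus an optimally placed strip and tracking the limiting proportion of $I$-type vertices (the paper's Lemma 4.4). The one caveat is that the intermediate ratio is not \emph{literally} sandwiched between the two ball ratios --- the paper's upper bound is $\bigl(|I_{n+2;p,q}|+2\bigr)/\bigl|\bigcup_{j=0}^{n+1}L_j\bigr|$, carrying an additive $+2$ from the strip endpoints and requiring a monotonicity argument in $|\mathcal{N}_o|$ --- but since that correction vanishes in the limit, your argument is correct as a whole.
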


Recall that our regular balls $B_n(\origin)$ represent a special case of sets built using the same recursive procedure as in \cite[Proposition 4.6]{haggstrom2002explicit}, taking our initial set $L_0$ as $K_0$. 
For instance, the following set does not meet the requirements of \cite{haggstrom2002explicit}: a ball to which a strip is attached, see e.g.  Figure \ref{fig:min1} where  there exists a tile in the external layer that intersects the strip only at certain vertices of $E_{1;7,3}$ (e.g. the tile containing 2 pink and 3 blue vertices). 

\section{Proofs }\label{sec:proofs}
In this Section we will prove our main results. 
\subsection{Proof of Theorem \ref{minimizers}} 
In order to prove Theorem \ref{minimizers} we need the following lemma. In this lemma we show that $\mathcal{M}_N$ only contains connected sets. 

\begin{lem}\label{lem:not_connected}
Let $A \subset \mathcal{V}$ be a non-connected set of vertices, and $|A|=N$. Then there exists a set $B\subset \mathcal{V}$ such that $|B|=|A|$ and $|\partial_e B| <
|\partial_e A|$.
\end{lem}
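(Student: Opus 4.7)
The plan is to exploit the vertex-transitive action of the Fuchsian group $\mathcal{G}_{p,q}$ on $\mathcal{L}_{p,q}$ to translate one connected component of $A$ so that it becomes adjacent to the remaining ones; merging the two fuses some boundary edges and produces a set $B$ with the same cardinality and strictly smaller perimeter. First, I would decompose $A$ into its connected components $A = A_1 \sqcup \cdots \sqcup A_k$ with $k \geq 2$. Since no edge of $\mathcal{E}$ joins two distinct components, every edge in $\partial_e A_i$ has its outer endpoint in $\mathcal{V} \setminus A$ rather than in another $A_j$, so the external boundary is additive:
\[
|\partial_e A| = \sum_{i=1}^{k} |\partial_e A_i|.
\]

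Setting $A' := A_1 \cup \cdots \cup A_{k-1}$, the next step is to construct an isometry $\phi \in \mathcal{G}_{p,q}$ such that $\tilde A_k := \phi(A_k)$ satisfies (i) $\tilde A_k \cap A' = \emptyset$ and (ii) at least one edge of $\mathcal{E}$ joins $\tilde A_k$ to $A'$. Using vertex-transitivity and the infiniteness of the lattice, I would first pick a $\phi_0$ sending a distinguished vertex of $A_k$ far from $A'$ so that $\phi_0(A_k)$ is disjoint from $A'$, and then bring the image back toward $A'$ by composing with a finite sequence of generators of $\mathcal{G}_{p,q}$ along a geodesic axis, stopping at the first moment where adjacency to $A'$ appears. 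Setting $B := A' \cup \tilde A_k$ then gives $|B| = N$.

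To finish, let $m \geq 1$ be the number of edges of $\mathcal{E}$ between $\tilde A_k$ and $A'$. Each such edge contributes $1$ to both $|\partial_e A'|$ and $|\partial_e \tilde A_k| = |\partial_e A_k|$ (the latter by isometric invariance of $\phi$) but $0$ to $|\partial_e B|$, while all other boundary edges are untouched. Combined with the additivity above, this yields
\[
|\partial_e B| = |\partial_e A'| + |\partial_e A_k| - 2m = |\partial_e A| - 2m \leq |\partial_e A| - 2 < |\partial_e A|,
\]
which is the claimed inequality.

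The main obstacle I expect is making the sliding construction rigorous in the hyperbolic setting: unlike in $\mathbb{Z}^d$, translations along different axes of $\mathcal{G}_{p,q}$ do not commute, and one must guarantee that the image reaches a position which is disjoint from $A'$ and simultaneously graph-adjacent to it without ever overlapping. A clean way around this is to combine hyperbolic translations with the rotational generators $a,b$: after bringing $\phi_0(A_k)$ close to $A'$, the rotational stabilizer at each vertex (of order $q$) provides enough orientational freedom to reorient $A_k$ away from $A'$ while preserving a single boundary edge, and the finiteness of $A' \cup A_k$ ensures that the procedure terminates after finitely many steps.
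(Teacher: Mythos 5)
Your proposal follows essentially the same route as the paper's proof: decompose $A$ into its connected components, use a lattice automorphism to translate one component so that it is disjoint from but adjacent to the union of the others, and observe that merging fuses boundary edges, yielding $|\partial_e B| \leq |\partial_e A| - 2 < |\partial_e A|$ (the paper is content with the weaker bound $|\partial_e A|-1$). The one delicate point you flag --- rigorously producing a placement of the translated component that is simultaneously disjoint from and adjacent to the rest --- is also left implicit in the paper, which simply invokes ``the homomorphism translating $A_2$ towards $A_1$'', so your treatment is, if anything, slightly more explicit.
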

\begin{proof}[Proof of Lemma \ref{lem:not_connected}]
        Suppose that $A$ is composed of $m$ connected components $A_1,...,A_m$ with $m>1$. W.l.o.g. let $A_2$ be the closest (in graph distance) connected set of vertices to $A_1$ in the same layer. Let $B$ be the set of vertices composed of $m-1$ connected components $B_1,...,B_{m-1}$ such that $B_i=A_i$ for $i=3,...,m-1$ and 
        $B_1=A_1 \cup H(A_2)$ is a connected set of vertices where $H(A_2)$ is the homomorphism translating vertices from $A_2$ towards $A_1$. Thus, $|\partial_e B| \leq |\partial_e A|-1< |\partial_e A|$ and we conclude.
\end{proof}

In the following, we will prove Theorem \ref{minimizers}.
Given any set $D\in \mathcal{S}_N\setminus \mathcal{M}_N$ we will construct a set $A$ out of $D$ by elementary operations such that $|\partial_e D| > |\partial_e A|$. By Lemma \ref{lem:not_connected} we can exclude all sets $D$ which are not connected. Given a connected set $D$, we construct the balls $B_{D,Min}(\overline{x})$ (resp. $B_{D,max}(\overline{x})$) containing (resp. contained in $D$), see also Definitions \ref{def:bmax} (resp. \ref{def:bmin}). Note that we can write any such $D$ as 
\[
D = B_{D,max}\cup \mathcal{N}_o.
\]
To ease notation we will assume w.l.o.g.~that $\overline{x}=\origin$. The strategy will be to show that any arrangement of occupied/empty strips of vertices in the annulus $B_{D,Min} \setminus B_{D,max}$ which is satisfying conditions (C1) or (C2) in Definition \ref{def:minset}, has larger perimeter.
We will develop the proof for the case $p\geq 4$ and leave $p=3$ for the reader since it is a simple adaptation replacing the set $I$ by $I''$.

In the following argument we will construct a set of smaller perimeter than $D$ depending on the number of empty strips $s_e$ in the annulus. Recall that if $s_e=0$, then $D\in \mathcal{M}_N$ which is a contradiction.
We will distinguish between three cases:
\begin{itemize}
\item[(I)] $s_e=1$ and the empty strip is in the last layer, $S_e \in L_{\overline{M}}$ .
\item[(II)] $s_e=1$ and the empty strip is not in the last layer, $S_e \in L_{k}$ where $k<\overline{M}$.
\item[(III)] Several empty strips, $s_e > 1$.
\end{itemize}

\paragraph{\texttt{CASE (I)}}
Note that in this case $B_{D,Min} = B_{D, max} \cup L_{\overline{M}}$.
Call $S_o$ the occupied strip in $L_{\overline{M}}$, in this case we have that $S_o=\mathcal{N}_o$. Then we know that its vertices belong either to $I_{\overline{M};p,q}$ resp. to $E_{\overline{M};p,q}$. In fact, we can decompose a general strip $S_o$, as
\begin{equation}\label{eq:S_decomp}
S_o = (S_o \cap I_{\overline{M};p,q} ) \cup (S_o\cap E_{\overline{M};p,q}) \coloneqq  S_o(I)\cup S_o(E), 
\end{equation}
and $|S_o(I)| < o_{max}$ by assumption, see also Figure \ref{fig:thm1} for an example. If $|S_o(I)|=o_{max}$, then $D\in \mathcal{M}_N$ which is a contradiction. Recall that $o_{max}$  was the maximal number of vertices in $I_{\overline{M};p,q}$  an occupied strip of size $|\mathcal{N}_o|$ can have. By a direct computation, we obtain 
\begin{equation}\label{eq:B'}
|\partial_e B_{D,max}| \leq |\partial_e D| +(|S_o(E)|+2)(4-q)+(|S_o(I)|-2)(6-q). 
\end{equation}
We will construct $A$ as follows, an example can be found in Figure \ref{fig:thm2}. Let $S'_o$ denote an occupied strip in layer $L_{\overline{M}}$ such that $|S'_o|=|S_o|$ and $|S'_o(I)|=o_{max}$. Since $|S_o(I)| < o_{max}$, we then necessarily have that $|S_o(E)| > |S'_o(E)|$ and 
\begin{equation}\label{eq:omax}
o_{max} = |S_o(I)|+|S_o(E)| - |S'_o(E)|.
\end{equation}
Set now $A\coloneqq B_{D,max}\cup S'_o$. Note that the construction of the set $A$ is not unique. Therefore,
    \begin{align}
    |\partial_e A| &= | \partial_e B_{D,max}|+(|S'_o(E)|+2)(q-4)+(m-2)(q-6) \notag \\
    &\overset{\eqref{eq:B'}}\leq |\partial_e D| + (|S'_o(E)|+2)|(q-4) - (|S_o(E)|+2)(q-4)+(|m|-2)(q-6) - (|S_o(I)|-2)(q-6)\notag \\
    &= |\partial_e D|-(|S_o(E)|-|S'_o(E)|)(q-4) + (o_{max}-|S_o(I)|)(q-6) \notag \\
    & \overset{\eqref{eq:omax}}= |\partial_e D|-2\underbrace{(o_{max}-|S_o(I)|)}_{> 0 \text{ by assumption}}< |\partial_e D|.
    \end{align}

    \begin{minipage}{0.45\textwidth}
        \centering
        \includegraphics[width=0.64\textwidth]{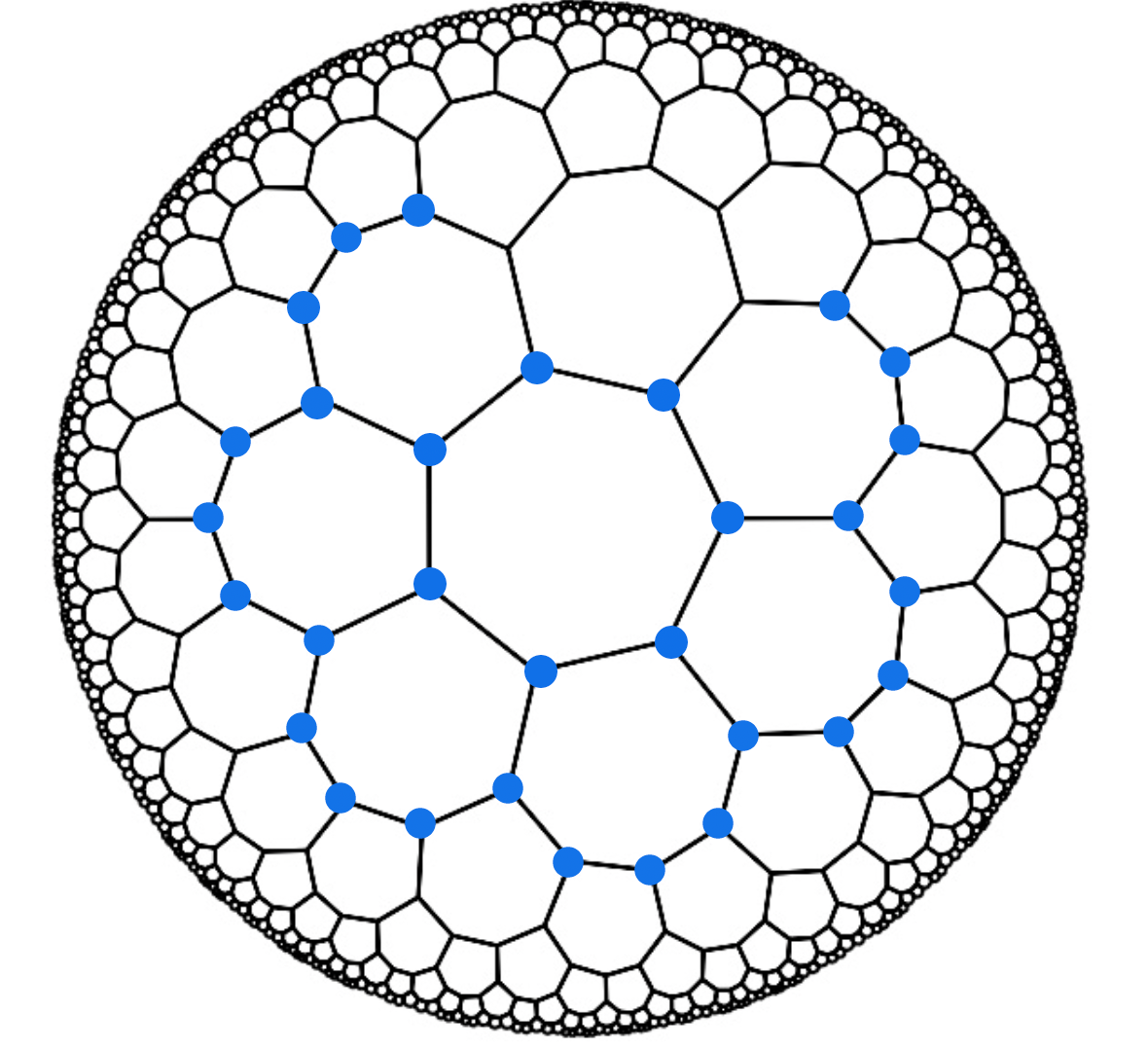} 
         \put(-76,55){$\origin$}
        \put(-73,60){ $\bullet$}
        \captionof{figure}{Example of $D\in \mathcal{S}_{30}$ with one empty and one occupied strip in the last layer, $|S_o(I)|=5$.} \label{fig:thm1}
    \end{minipage}\hfill
    \begin{minipage}{0.45\textwidth}
        \centering
        \includegraphics[width=0.64\textwidth]{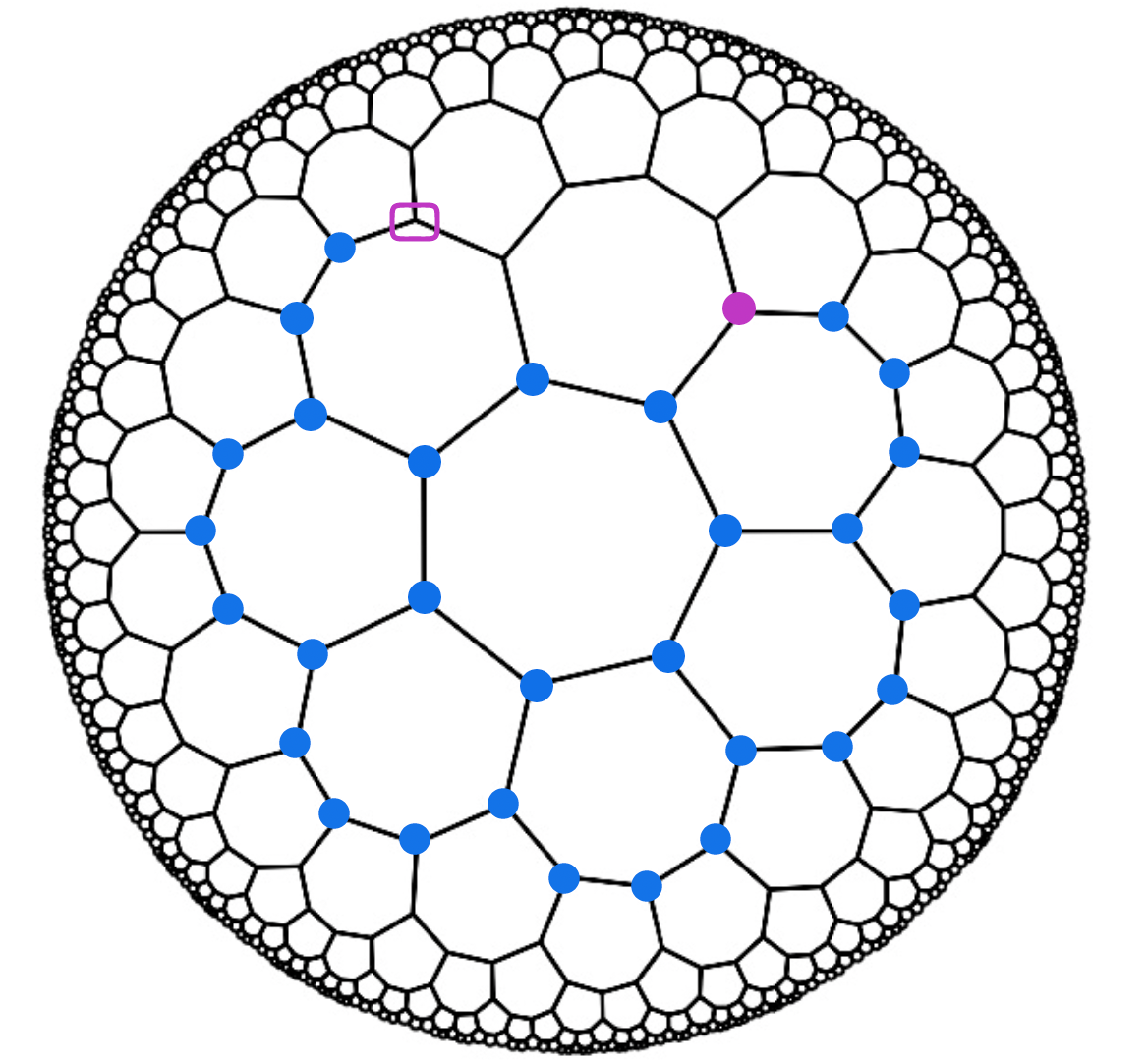} 
         \put(-76,55){$\origin$}
        \put(-73,60){ $\bullet$}
        \captionof{figure}{Example of $A$ constructed from $D$ by defining a new strip $S'$ such that $|S'_o(I)|=o_{max}=6$. }\label{fig:thm2}
    \end{minipage}

\paragraph{\texttt{CASE (II)}}
In this case we have one empty strip $S_e$ lying in $L_k$ for some $k<\overline{M}$. If $q\geq 4$ we can proceed as follows. 
First note that for a set $A'\coloneqq D\cup (S_e\setminus \{v\})$, where we added a vertex to the empty strip, the perimeter satisfies
\begin{equation}\label{eq:case2}
|\partial_e A'| \leq |\partial_e D| + (2-q).
\end{equation}
Construct $A$  by removing an occupied vertex $w$ from the last layer in $E_{\overline{M};p,q}$, i.e. $A=A'\cup\{w\}$. Then $|A|=|D|$ and 
\[
|\partial_e A| = |\partial_e A'| + (4-q)
\]
and using Equation \eqref{eq:case2} and $q\geq 4$ we obtain
\[
|\partial_e A| \leq |\partial_e D| + 6 -2q < |\partial_e D|.
\]
However, if $q=3$, moving one vertex will not be enough to get a set with strictly smaller perimeter, we instead have to fill the whole empty strip with vertices from the last layer. Note that
\[
|\partial_e B_{D, Min}| = |\partial_e D| - |S_e| -3. 
\]
Call $S'_o$ an occupied strip from $L_{\overline{M}}$ with cardinality $|S'_o|=|S_e|$ and order its vertices, such that the first vertex $w\in S'_o\cap E_{\overline{M};p,q}$. Fill the empty strip $S_e$. Denote this new set by $A$, then
\[
|\partial_e A| = |\partial_e A'|+ |S'_o(I)| - |S'_o(E)| +  2
\]
and therefore $|\partial_e A| = |\partial_e D|- 2 |S'_o(E)| -1 < |\partial_e D|$.

\paragraph{\texttt{CASE (III)}}
For the general case we have  $s_e>1$ empty strips in the annulus $B_{D,Min} \setminus B_{D,max}$, denote them by $S_{e,1},\ldots,S_{e,s_e}$ where $s_e$. We will distinguish $q\geq 4$ and $q=3$. For  examples, see Figures \ref{fig:case3-1}, resp.~\ref{fig:case3-2}. We will first assume that there exists a strip  $S_{e,j}$ which is in some layer $L_k$ with $k<\overline{M}$ such that the vertices in the layers $L_{k-1}\cup L_{k+1}$ connected with $S_{e,j}$ are occupied.

\vspace{0.5cm}

    \begin{minipage}{0.45\textwidth}
        \centering
        \includegraphics[width=0.74\textwidth]{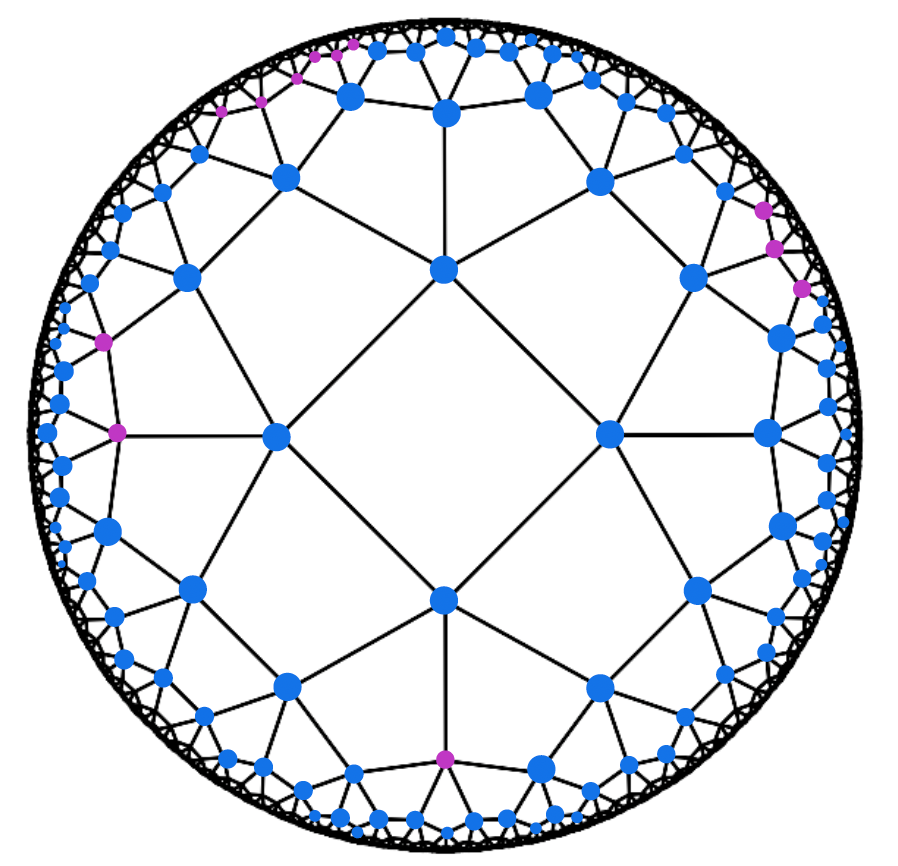} 
        \put(-86,66){$\origin$}
        \put(-83,71){ $\bullet$}
        \captionof{figure}{Example for a set $D$ (blue sites) for $p=4$ and $q\geq 4$. We highlighted the empty sites by pink vertices. We have two empty strips $S_{e,1}, S_{e,2}$ in $L_1$ and two empty strips $S_{e,3}, S_{e,4}$ in $L_2$.} \label{fig:case3-1}
    \end{minipage}
    \hfill
    \begin{minipage}{0.45\textwidth}
        \centering
        \includegraphics[width=0.74\textwidth]{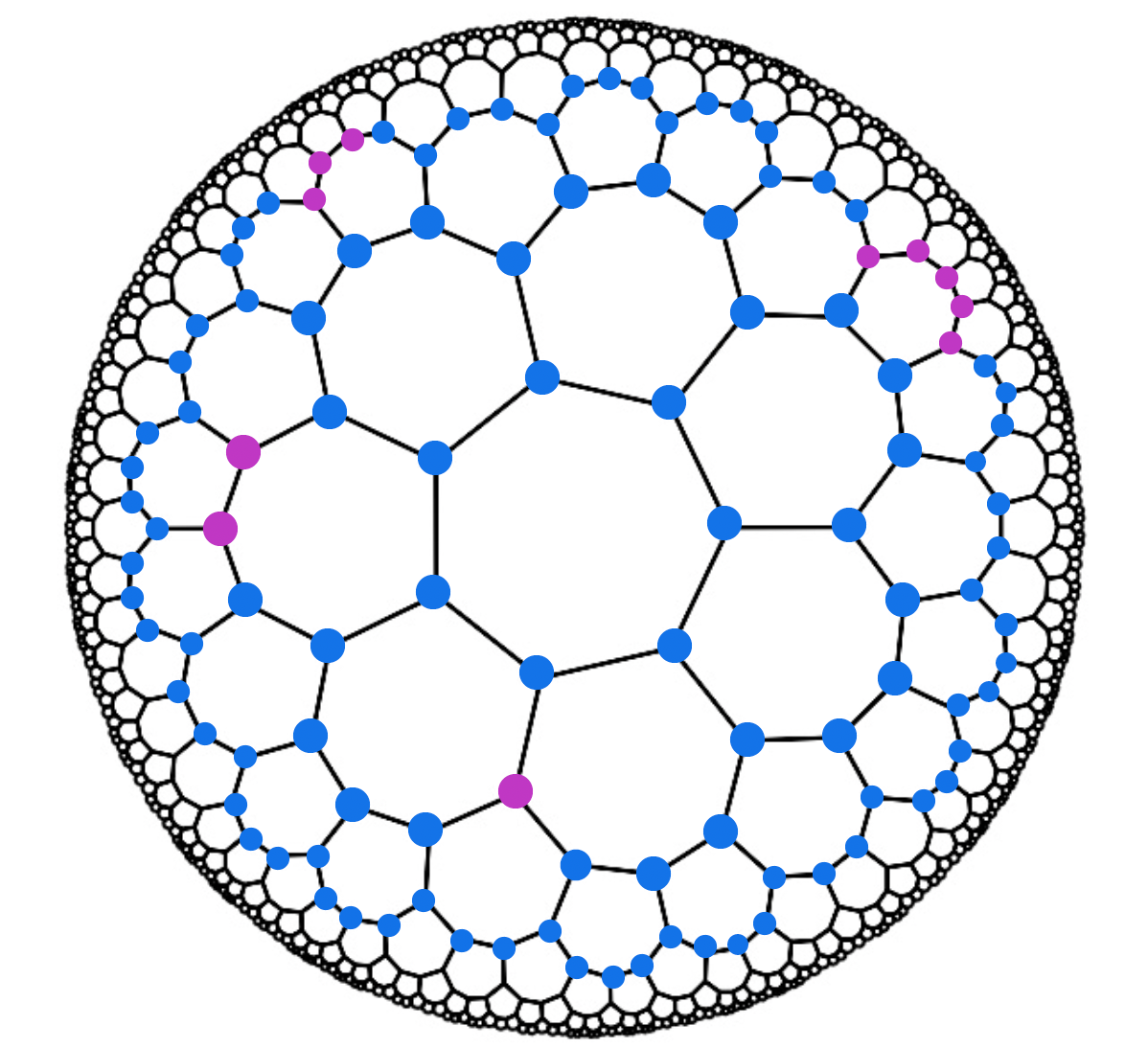} 
        \put(-86,62){$\origin$}
        \put(-83,68){ $\bullet$}
        \captionof{figure}{Example for a set $D$ (blue sites) for $p=7$ and $q=3$. We highlighted the empty sites by pink vertices. We have two empty strips $S_{e,1}, S_{e,2}$ in $L_1$ and two empty strips $S_{e,3}, S_{e,4}$ in $L_2$.}\label{fig:case3-2}
    \end{minipage}

    \begin{minipage}{0.45\textwidth}
        \centering
        \includegraphics[width=0.74\textwidth]{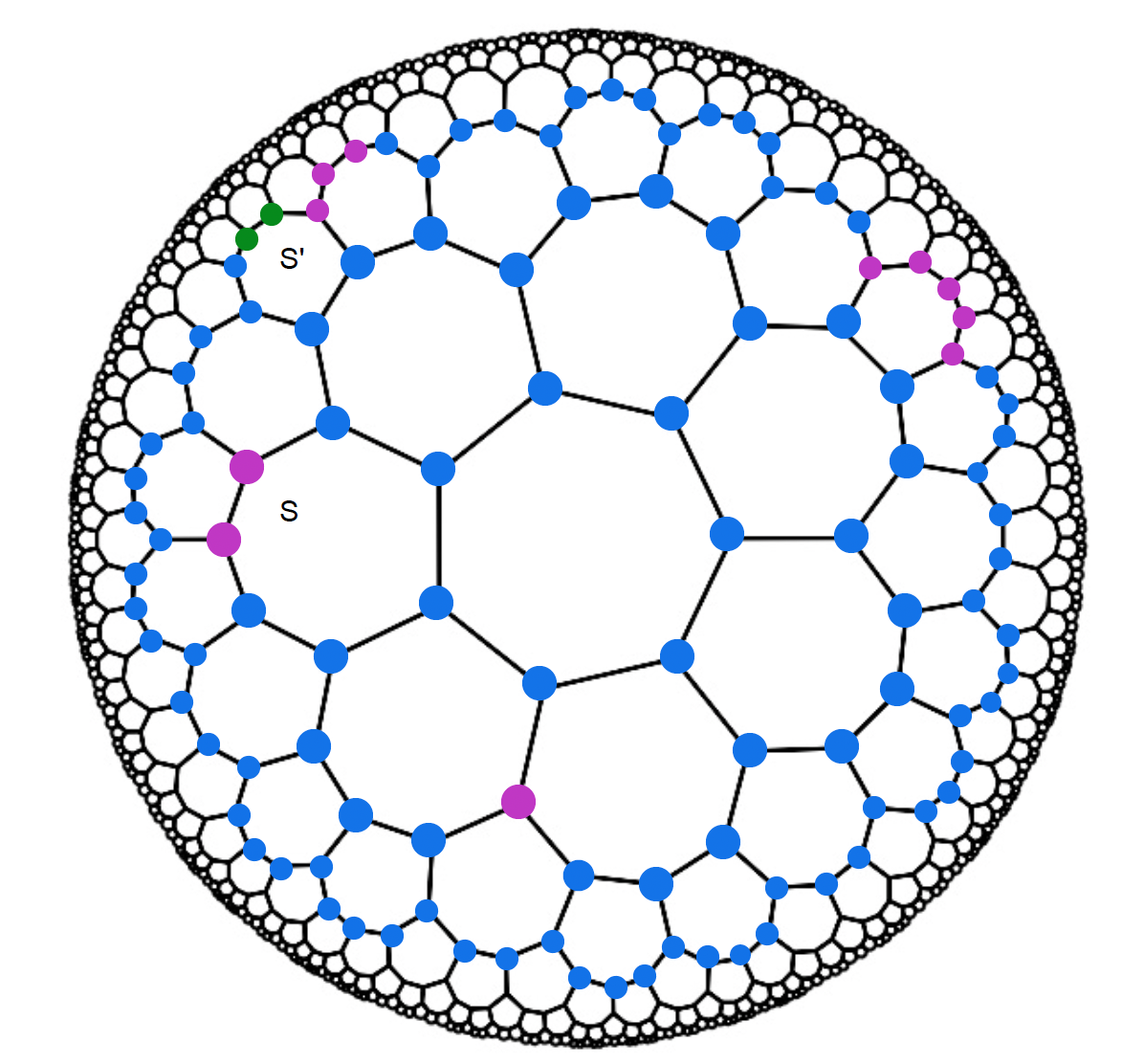} 
         \put(-86,62){$\origin$}
        \put(-83,68){ $\bullet$}
        \captionof{figure}{Example of a set $D$. We highlighted an empty strip $S$ and a strip $S'$ with the same cardinality with first vertex in $E_{2;7,3}$.} \label{fig:case3-3}
    \end{minipage}
    \hfill
    \begin{minipage}{0.45\textwidth}
        \centering
        \includegraphics[width=0.74\textwidth]{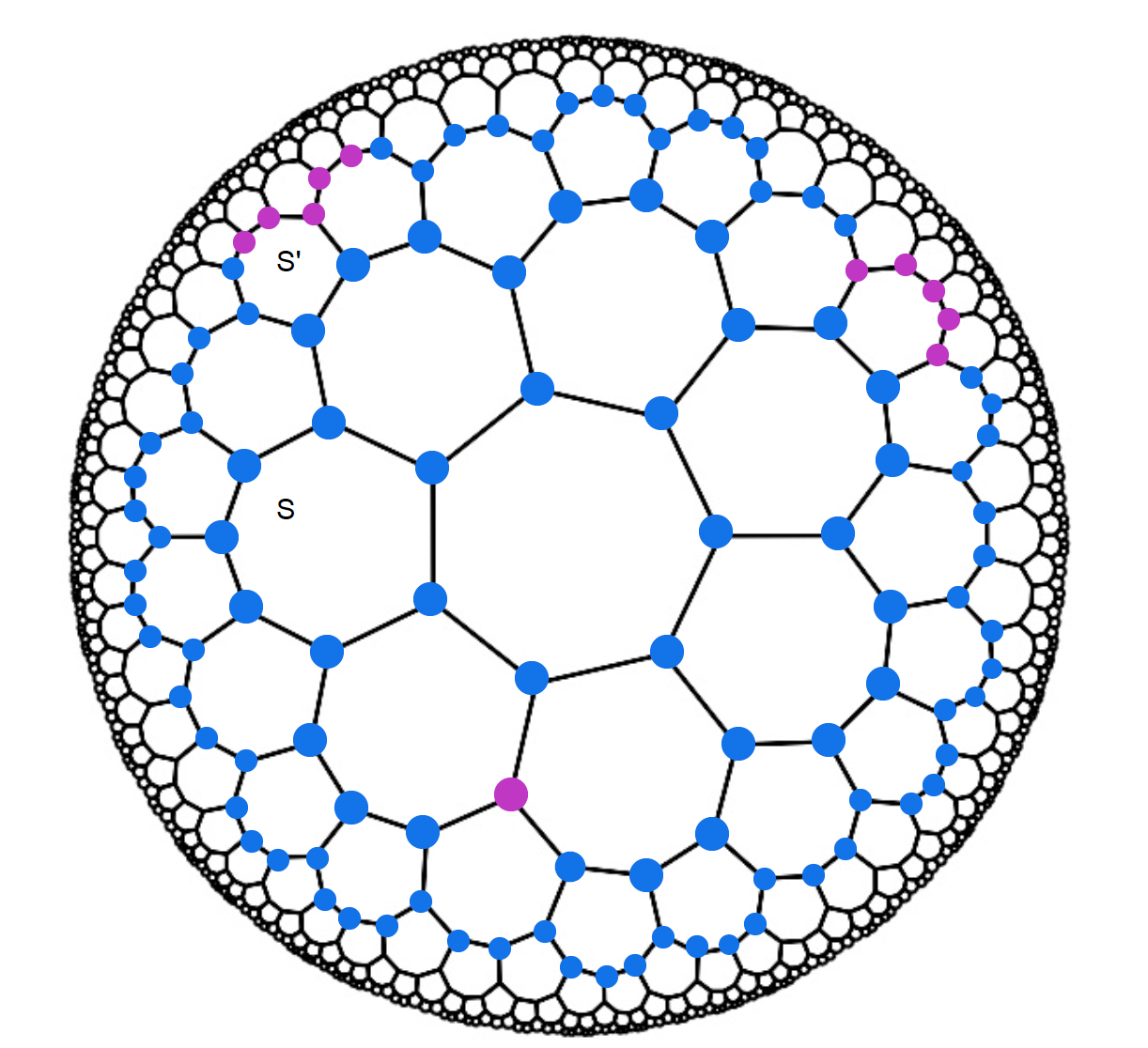} 
        \put(-86,62){$\origin$}
        \put(-83,68){ $\bullet$}
        \captionof{figure}{Example of a constructed set $A$ obtained from the set $D$ in Figure \ref{fig:case3-3}. We filled the empty strip $S$ with vertices from $S'$ with the same cardinality.}\label{fig:case3-4}
    \end{minipage}

\vspace{0.3cm}

Let us first consider $q\geq 4$. Consider any strip $S_{e,j} \ni v$. If we fill the empty vertex $v$, then
\begin{equation}\label{eq:case3}
| \partial_e D\cup\{v\}| \leq |\partial_e D| + (2-q),
\end{equation}
see also Figure \ref{fig:case3-1}. If there exists  a vertex $w\in E_{\overline{M};p,q} \cap \mathcal{N}_o$, then define $A\coloneqq (D\cup \{v\}) \setminus \{w\}$. The perimeter satisfies
$|\partial_e A| = |\partial_e D\cup \{v\}|+ (4-q).$

By Equation \eqref{eq:case3}, we obtain the claim since
$|\partial_e A| \leq |\partial_e D| +6-2q < |\partial_e D|.$
Otherwise if all occupied vertices $w\in L_{\overline{M}}\cap \mathcal{N}_o$ are in $I_{\overline{M};p,q}$, there exists a vertex in $w'\in E_{\overline{M}-1;p,q}\cap \mathcal{N}_o$. Analogously we obtain for $A\coloneqq (D\cup\{v\})\setminus \{w'\}$ that the perimeter can be bounded by
\[
|\partial_e A| \leq |\partial_e D| + 4-2q < |\partial_e D|.
\]

In the remaining argument let us assume that $q=3$. We have to distinguish between the case that in $L_{\overline{M}}\cap \mathcal{N}_o$ there are enough vertices to fill $S_{e,j}$ or not. Let us first consider the case that $|L_{\overline{M}}\cap \mathcal{N}_o| \geq  |S_{e,j}|$. Let $A'=D\cup S_{e,j}$, then the perimeter is equal to $|\partial_e A'|=|\partial_e D|-|S_{e,j}|-3$.
If the whole last layer is occupied, i.e. when $L_{\overline{M}}\cap \mathcal{N}_o = L_{\overline{M}}$, then define (and order) a strip in the last layer $S'_o =\{v_1,\ldots,v_K\}$ for $K=|S_{e,j}|$ such that $v_1\in E_{\overline{M};p,3}$. Set $A= (D\cup S_{e,j})\setminus S'_o$ and by a direct computation we get that $|\partial_e A| = |\partial_e D\cup S_{e,j}|+|S'_o(I)|-|S'_o(E)|+2$ and therefore
\[
|\partial_e A| \leq |\partial_e D|-2|S'_o(E)|-1< |\partial_e D|.
\]
Otherwise if there are empty vertices in $L_{\overline{M}}$, then define the strip $S'_o \subset L_{\overline{M}}$ with $v_1$ next to an empty vertex $w \in L_{\overline{M}}\cap \mathcal{N}_e$. 
Then we obtain for the perimeter, setting $A= (D\cup S_{e,j})\setminus S'_o$, then 
\[
|\partial_e A| \leq |\partial_e D\cup S_{e,j}| + |S'_o(I)|-|S'_o(E)| \leq |\partial_e D| - 2|S'_o(E)|-3 < |\partial_e D|.
\]
For examples of the sets $D$, resp. $A$, see Figures \ref{fig:case3-3} resp.~\ref{fig:case3-4}.

%%%%%

Let us now focus on the case that $|L_{\overline{M}}\cap \mathcal{N}_o| < |S_{e,j}|$, hence there are less occupied vertices than $|S_{e,j}|$. Call $h\coloneqq |L_{\overline{M}}\cap \mathcal{N}_o|$ and fill $h$ vertices $\{v_1,\dots,v_h\}$ in $S_{e,j}$. Then
\[
|\partial_e D\cup \{v_1,\ldots,v_h\}| = |\partial_e D| - h.
\]
Then we define $A$ from the previous set by removing all vertices from the last layer, i.e. $A\coloneqq (D\cup \{v_1,\ldots,v_h)\setminus (L_{\overline{M}}\cap \mathcal{N}_o)$. Then analogously
\[
|\partial_e A| \leq |\partial_e D| - 2|\mathcal{N}_o\cap E_{\overline{M};p,3}| -3 < |\partial_e D|.
\]

Assume now that a strip $S_{e,j}$ such that the occupied vertices in the layers $L_{k-1}\cup L_{k+1}$ are connected with $S_{e,j}$, does not exist.
Recall that we have $s_e$ empty (resp. occupied) strips in the annulus $B_{D,Min} \setminus B_{D,max}$. Denote by $S_{o,1},\ldots,S_{o,s_e}$ the occupied strips in layers  $L_j$ where $j\in \{\overline{m}+1,\ldots,\overline{M}\}$. Recall that $\bigcup_{i=1}^{s_e}S_{o,i} = \mathcal{N}_o$.
Then the number of occupied vertices in $\bigcup_{j=\overline{m}+1}^{\overline{M}}I_{j;p,3}$ satisfies
\[
t\coloneqq\left | \mathcal{N}_o \cap \left ( \bigcup_{j=\overline{m}+1}^{\overline{M}}I_{j;p,3}\right )\right | < o_{max} + s_e-1,
\]
since by assumption, $D\notin\mathcal{M}_N$. Then for $D\setminus \mathcal{N}_o = B_{D,max}$ we have that
\begin{align}\label{eq:the3.1q3}
|\partial_e B_{D,max}| &\leq |\partial_e D| +\sum_{j=1}^{s_e} \left (|S_{o,j}(I)| -1\right)- \sum_{j=1}^{s_e} \left (|S_{o,j}(E)|+1\right )\notag\\
& = |\partial_e D| +(t-s_e) -(|\mathcal{N}_o|-t+s_e).
\end{align}

If $|\mathcal{N}_o| \leq |L_{\overline{m}+1}|$, we define $A\coloneqq B_{D,max}\cup S'$ where $|S'|=|\mathcal{N}_o|$ and such that $S'$ contains $o_{max}$ vertices in $I_{\overline{m}+1;p,3}$.
Otherwise there exists $k\in \{\overline{m}+1,\ldots,\overline{M}-1\}$ such that
\[
\sum_{j=\overline{m}+1}^k|L_{j}| < |\mathcal{N}_o| \leq \sum_{j=\overline{m}+1}^{k+1} |L_j|.
\]
In this case we define $A$ by $A\coloneqq B_{D,max}\cup \bigcup_{j=1}^k L_j\cup S'$, where $S'$ is a strip of length $|\mathcal{N}_o| - \sum_{j=\overline{m}+1}^k|L_{j}|$ and such that $S'$ contains the maximal number of vertices in $I_{k+1;p,3}$. For an example see Figure \ref{fig:ex_S}.
Then using Equation \eqref{eq:the3.1q3}
   \begin{align}
       |\partial_e A|  &\leq |\partial_e D| + (t-s_e-o_{max}+1)< |\partial_e D|
    \end{align}
    since $t< o_{max}+s_e-1$.

\begin{figure}[!hbtp]
\centering
\includegraphics[scale=0.3]{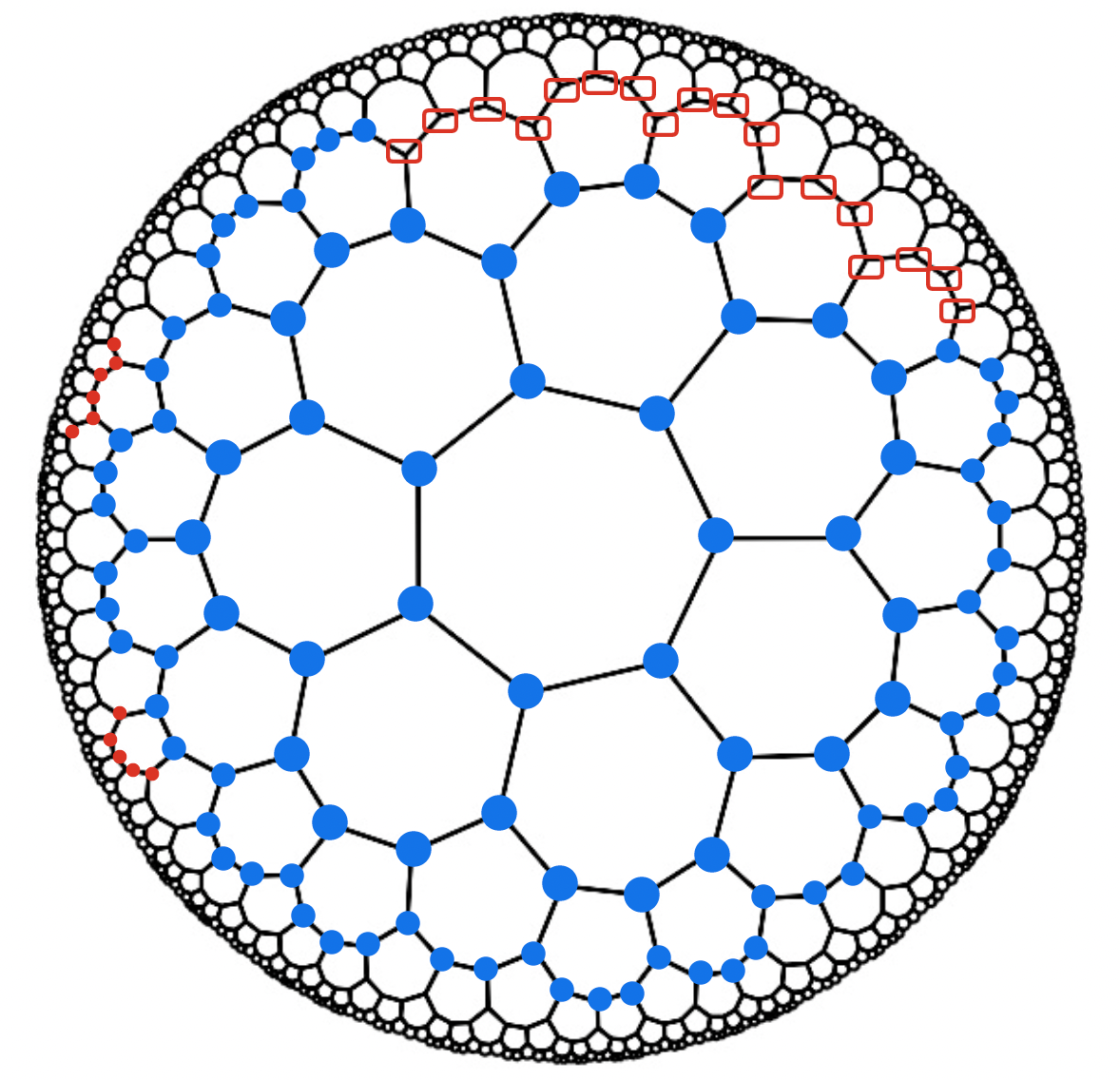}
\put(-96,90){$\origin$}
\put(-93,84){ $\bullet$}
\caption{Example of a set $D$ with two stripes of red occupied vertices, $S_{o,1}, S_{o,2}$ in the last layer and one empty stripe of length 18 in the previous layer which will be occupied by $|S_{o,1} \cup S_{o,2}|=11$ many vertices.}\label{fig:ex_S}
\end{figure}

\newpage
\subsection{Proof of Theorem \ref{thm:inf}}

Remark that for a fixed $N\in \mathbb{N}$ and let $Q \in \mathcal{S}_N, \, \mathscr{M} \in \mathcal{M}_N$. By Theorem \ref{minimizers} we have that
\begin{align}
\frac{|\partial_e Q|}{|Q|}  \geq\frac{|\partial_e \mathscr{M}|}{|\mathscr{M}|} 
&\geq \inf \left  \{ \frac{|\partial_e A|}{|A|} : 0<|A|<\infty\right \}. \notag
\end{align}
On the other hand by Theorem 4.1 in \cite{haggstrom2002explicit} we have that
\[
i_e(\Lpq) = \inf \left \{ \frac{|\partial_e A|}{|A|} : 0<|A|<\infty\right \} =   (q-2) \sqrt{1-\frac{4}{(p-2)(q-2)}}.
\]
Thus, it follows trivially that
\begin{align}
\lim_{N \to \infty}\frac{|\partial_e \mathscr{M}|}{|\mathscr{M}|} \geq i_e(\Lpq).
\end{align}
The claim of the result will follow from Lemmas \ref{lem:balls} and \ref{upper} which we will prove below by transfer matrix methods.
\begin{lem}\label{lem:balls}
We have that 
\[
\lim_{n \to \infty} \frac{|\partial_e B_n(\origin)|}{|B_n(\origin)|} = i_e(\mathcal{L}_{p,q}).
\]
\end{lem}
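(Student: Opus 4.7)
The plan is to compute the asymptotic behavior of $|B_n(\origin)|$ and $|\partial_e B_n(\origin)|$ via the layer recurrences of \cite{RNO, moran1997growth}, and then to identify the limit of their ratio with the H\"aggstr\"om--Jonasson--Lyons constant by algebraic manipulation. Concretely, the counts $|I_{n;p,q}|$ and $|E_{n;p,q}|$ (and the analogous $|I'_{n;3,q}|, |I''_{n;3,q}|$ when $p=3$) satisfy a two-dimensional linear recurrence whose transfer matrix has characteristic polynomial $\lambda^2 - ((p-2)(q-2) - 2)\lambda + 1 = 0$. Its roots are precisely the eigenvalues $\lambda_\pm$ of Corollary \ref{cor:aN}, and since $(p-2)(q-2)>4$ (equivalent to $1/p + 1/q < 1/2$) we have $\lambda_+ > 1 > \lambda_- > 0$ and $\lambda_+\lambda_- = 1$. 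Solving the recurrence with the explicit initial data $|L_0|, |L_1|$ yields
\begin{equation*}
|L_n| \;=\; c\,\lambda_+^n + c'\,\lambda_-^n \;\sim\; c\,\lambda_+^n, \qquad |B_n(\origin)| \;=\; \sum_{k=0}^n |L_k| \;\sim\; \frac{c\,\lambda_+}{\lambda_+ - 1}\,\lambda_+^n.
\end{equation*}

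Next, every edge of $\partial_e B_n(\origin)$ joins a vertex of $L_n$ with a vertex of $L_{n+1}$. By the layer attachment rule of \cite{RNO}, each vertex of $L_{n+1}$ contributes a fixed number of edges back into $L_n$: one or two if it lies in the internal sub-layer $I_{n+1;p,q}$ (with the obvious modification $I''_{n+1;3,q}$ for $p=3$) and none if it lies in the external sub-layer $E_{n+1;p,q}$. Consequently $|\partial_e B_n|$ is an explicit linear combination of $|I_{n+1;p,q}|$ and $|E_{n+1;p,q}|$, and by the same recurrence $|\partial_e B_n| \sim d\,\lambda_+^n$ for a computable constant $d > 0$.

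Dividing and using $\lambda_+\lambda_- = 1$,
\begin{equation*}
\lim_{n \to \infty} \frac{|\partial_e B_n|}{|B_n|} \;=\; \frac{d(\lambda_+ - 1)}{c\,\lambda_+} \;=\; \frac{d}{c}\,(1 - \lambda_-).
\end{equation*}
A direct algebraic computation, relying on $\lambda_+ - \lambda_- = \sqrt{(p-2)(q-2)((p-2)(q-2)-4)}$ and on the closed forms of $c$ and $d$ extracted from the transfer matrix, reduces the right-hand side to $(q-2)\sqrt{1 - 4/((p-2)(q-2))} = i_e(\mathcal{L}_{p,q})$.

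The main obstacle is the bookkeeping of the constants $c$ and $d$: they depend both on the initial data $|L_0|, |L_1|$ and on the precise entries of the RNO transfer matrix, and the cases $p \geq 4$ and $p = 3$ must be handled separately (because the layer decomposition into internal/external vertices is slightly different for triangulations). The automatic lower bound $\liminf_{n} |\partial_e B_n|/|B_n| \geq i_e(\mathcal{L}_{p,q})$, which follows immediately from Definition \ref{def:cheeger_constant}(ii) together with Theorem 4.1 of \cite{haggstrom2002explicit}, serves as a valuable sanity check during the final simplification.
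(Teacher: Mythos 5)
Your proposal is correct and follows essentially the same route as the paper: express $|\partial_e B_n(\origin)|$ as a linear combination of the internal sub-layer counts $|I_{n+1;p,q}|$ (resp.\ $|I'_{n+1;3,q}|,|I''_{n+1;3,q}|$), drive everything through the Rietman--Nienhuis--Oitmaa transfer matrix, and let the dominant eigenvalue $\lambda_+$ of $\lambda^2-((p-2)(q-2)-2)\lambda+1$ control the limit of the ratio, which is then matched algebraically to $(q-2)\sqrt{1-4/((p-2)(q-2))}$. The only difference is presentational: the paper carries the exact closed forms (explicit eigenvectors, constants $a_\pm$, and geometric sums) through to the end, whereas you work with asymptotic constants $c,d$ and the identity $\lambda_+\lambda_-=1$, which is a legitimate shortcut provided the final ``direct algebraic computation'' is actually carried out for both cases $p\geq 4$ and $p=3$.
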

\begin{lem}\label{upper} 
For $\mathscr{M} \in  \mathcal{M}_N$ we have that
\begin{align}
    \lim_{|\mathscr{M}|\rightarrow \infty} \frac{|\partial_e \mathscr{M}|}{|\mathscr{M}|} \leq i_e(\Lpq).
\end{align}

\end{lem}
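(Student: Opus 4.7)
The plan is to combine Lemma \ref{lem:balls}, which establishes $|\partial_e B_n(\origin)|/|B_n(\origin)|\to i_e(\Lpq)$, with the explicit layer-by-layer description of minimal shapes from Definition \ref{def:minset}. Observe first that every ball $B_n(\origin)$ satisfies condition (C1) of Definition \ref{def:minset}, and hence $B_n(\origin)\in\mathcal{M}_{|B_n(\origin)|}$. Along the subsequence $N_k:=|B_k(\origin)|$ we may therefore take $\mathscr{M}_{N_k}=B_k(\origin)$, so that $|\partial_e\mathscr{M}_{N_k}|/N_k\to i_e(\Lpq)$ follows immediately from Lemma \ref{lem:balls}.

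For a general $N$ with $|B_n(\origin)|\leq N<|B_{n+1}(\origin)|$, Definition \ref{def:minset} together with Theorem \ref{minimizers} shows that, up to relabelling, $\mathscr{M}_N = B_n(\origin)\cup S_N$, where $S_N\subset L_{n+1}$ is a strip of length $s:=N-|B_n(\origin)|$ maximising the number of its vertices in $I_{n+1;p,q}$ (resp.\ $I''_{n+1;3,q}$ when $p=3$). Since each vertex in $I_{n+1;p,q}$, resp.\ $E_{n+1;p,q}$, has a fixed split of its $q$ incident edges between $L_n$, $L_{n+1}$ and $L_{n+2}$, I would expand
$$|\partial_e\mathscr{M}_N| = |\partial_e B_n(\origin)| + \alpha_{p,q}^{(n)}\,s + O(1),$$
where $\alpha_{p,q}^{(n)}$ denotes the per-vertex boundary contribution of an optimal $I$-rich strip in $L_{n+1}$ and the $O(1)$ absorbs the two endpoint corrections of the strip. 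Specialising this identity at $s=|L_{n+1}|$, i.e.\ comparing $B_n(\origin)$ to $B_{n+1}(\origin)$, gives
$$\alpha_{p,q}^{(n)} = \frac{|\partial_e B_{n+1}(\origin)| - |\partial_e B_n(\origin)|}{|L_{n+1}|} + O(|L_{n+1}|^{-1}).$$

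The main obstacle is that $|S_N|$ can be comparable to $|B_n(\origin)|$, since the hyperbolic layer sizes grow exponentially at a common rate $\mu$, so one cannot treat the strip as a negligible perturbation of the ball. The key step is to show $\alpha_{p,q}^{(n)}\to i_e(\Lpq)$: using the explicit recursions for $|L_k|,|I_k|,|E_k|$ from the RNO layer decomposition exploited in the proof of Lemma \ref{lem:balls}, together with the identity $|B_n(\origin)|\sim|L_n|\cdot\mu/(\mu-1)$, one computes
$$\alpha_{p,q}^{(n)} \;\longrightarrow\; (\mu-1)\cdot\lim_{n\to\infty}\frac{|\partial_e B_n(\origin)|}{|L_{n+1}|} \;=\; i_e(\Lpq).$$
Because $|\partial_e\mathscr{M}_N|/N$ is a convex combination of $|\partial_e B_n(\origin)|/|B_n(\origin)|$ and $\alpha_{p,q}^{(n)}$ up to a vanishing correction, this yields
$$\frac{|\partial_e\mathscr{M}_N|}{N} \leq \max\!\left(\frac{|\partial_e B_n(\origin)|}{|B_n(\origin)|},\,\alpha_{p,q}^{(n)}\right) + o(1) \;\longrightarrow\; i_e(\Lpq)$$
uniformly in the choice of $N\in[|B_n(\origin)|,|B_{n+1}(\origin)|)$, which establishes $\limsup_{|\mathscr{M}|\to\infty}|\partial_e\mathscr{M}|/|\mathscr{M}|\leq i_e(\Lpq)$ and completes the proof.
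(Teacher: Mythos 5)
Your argument is correct and follows essentially the same route as the paper: the paper likewise writes $|\partial_e\mathscr{M}| = |I_{n+1;p,q}| + (q-2)|\mathcal{N}_o| - 2|\mathcal{N}_o\cap I_{n+1;p,q}| + O(1)$, bounds $|\mathcal{N}_o\cap I_{n+1;p,q}|$ from below by the layer-average proportion $\frac{|I_{n+1;p,q}|}{|L_{n+1}|}|\mathcal{N}_o|$ (your $\alpha^{(n)}_{p,q}$ is exactly its coefficient $q-2-2\frac{|I_{n+1;p,q}|}{|L_{n+1}|}$), and then argues that the ratio is worst at $|\mathcal{N}_o|=|L_{n+1}|$, which reduces the claim to Lemma \ref{lem:balls}. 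The only cosmetic differences are that your linear expansion of $|\partial_e\mathscr{M}_N|$ should be stated as an inequality $\leq$ rather than an identity (an optimal strip may contain more than the average proportion of $I$-vertices, which only decreases the perimeter and so is harmless), and that you close with a convex-combination/maximum bound where the paper invokes monotonicity of the ratio in $|\mathcal{N}_o|$.
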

\vspace{0.2cm}
\subsubsection{Proof of Lemma \ref{lem:balls}}

Observe that the perimeter of a regular ball of radius $n$ is equal to (depending on $p\geq 4$ resp. $p=3$)
\[
|\partial_e B_n(\origin)| = |I_{n+1;p,q}|, \, \, \text{ resp. } \, \, |\partial_e B_n(\origin)| = 2|I''_{n+1;3,q}|+|I'_{n+1;3,q}|,
\]
so that the ratio becomes
\[
\frac{|\partial_e B_n(\origin)|}{|B_n(\origin)|}
 = \begin{cases}
\frac{|I_{n+1;p,q}|}{\sum_{k=0}^n \left (|I_{k;p,q}|+|E_{k;p,q}| \right )} & \text{ if } p\geq 4, \\
& \\
\frac{2|I''_{n+1;3,q}|+ |I'_{n;p,q}|}{3+\sum_{k=1}^n  \left (|I'_{k;3,q}|+|I''_{k;3,q}| \right )} & \text{ if } p=3.
 \end{cases}
 \]

From \cite{RNO} we have the following recursion relation for $I_{n;p,q}, E_{n;p,q}$, see also \cite[Equations (2.14) and (2.16)]{RNO}, for $p\geq 4$ and $n\geq 0$
\begin{align}
\binom{I_{n+1;p,q}}{E_{n+1;p,q}} = T_1 \binom{I_{n;p,q}}{E_{n;p,q}}
\end{align}
where $(I_{0;p,q},E_{0;p,q})=(0,p)$ and

\begin{align}\label{eq:T1}
T_1 = \begin{pmatrix}
q-3 & q-2 \\
8 -3p-3q+pq & 5-2p-3q+pq
\end{pmatrix}
\end{align}
resp. for $p=3$, $n\geq 1$

\begin{align}
\binom{I''_{n+1;3,q}}{I'_{n+1;3,q}} = T_2 \binom{I''_{n;3,q}}{I'_{n;3,q}}
\end{align}
where $(I''_{1;3,q},I'_{1;3,q})=(3,3(q-4))$ and

\begin{align}\label{eq:T2}
T_2 = \begin{pmatrix}
1 & 1 \\
q-6& q-5
\end{pmatrix}.
\end{align}
We will distinguish between $p\geq 4$ and $p=3$. Let us consider the first case. 
\paragraph{\texttt{CASE (I), $p\geq 4$.}}
Due to the recursion relation, we have that
\[
|I_{n+1;p,q}| = (1,0)T_1^{n+1} \begin{pmatrix}0 \\ p \end{pmatrix}.
\]
Diagonalizing $T_1$ we obtain that
\[
 T^n_1 = (e_+, e_-)\begin{pmatrix} \lambda^n_+ & 0 \\ 0 & \lambda^n_-\end{pmatrix} (e_+, e_-)^{-1},
\]
where the eigenvalues are equal to
\begin{align}
\lambda_{\pm} &= \frac{1}{2} \left (2+p(q-2)-2q \pm \sqrt{(p-2)(q-2)(q(p-2)-2p)}\right ) \notag \\
&= \left ( \frac{1}{2}(p-2)(q-2)-1\right ) \left ( 1 \pm \sqrt{1-\frac{4}{((p-2)(q-2)-2)^2} }\right ),
\end{align}
and the corresponding eigenvectors
\[
e_+ = \left(\frac{2}{p-4+\frac{\sqrt{(p-2)(p(q-2)-2q)}}{\sqrt{q-2}}} ,1\right)^T, \, \, \text{ resp. } e_- = \left(\frac{2}{p-4-\frac{\sqrt{(p-2)(p(q-2)-2q)}}{\sqrt{q-2}}} ,1\right)^T.
\]

Therefore,
\begin{equation}
|I_{n+1;p,q} |= \frac{p\sqrt{q-2} \left ( \lambda^{n+1}_+ - \lambda^{n+1}_-\right )}{\sqrt{(p-2)(p(q-2)-2q)}} 
\end{equation}
resp.

\begin{align}
\sum_{k=0}^{n} \left (|I_{k;p,q}|+|E_{k;p,q}| \right ) &= \sum_{k=0}^n (1,1)T^k_1 \binom{0}{p}=\frac{p}{2\sqrt{(p-2)(p(q-2)-2q)}} \sum_{k=0}^n \left ( a_-\lambda^k_{-} + a_+ \lambda^k_+\right ) \notag
\end{align}
with
\begin{align}\label{eq:apm}
a_{\pm} &=(\sqrt{(p-2)(p(q-2)-2q)} \mp 2 \sqrt{q-2} \pm p\sqrt{q-2})=(p-2)\sqrt{q-2} \left (\pm1+\sqrt{1-\frac{4}{(p-2)(q-2)}}  \right).
\end{align}
Then we can conclude that
\begin{align}\label{limit}
    \lim_{n \to \infty} \frac{|\partial_e (B_n(\origin)|}{|B_n(\origin)|}
    &=\lim_{n \to \infty}  2 \sqrt{q-2} \frac{\left ( \lambda^{n+1}_+ - \lambda^{n+1}_-\right )}{\sum_{k=0}^n \left ( a_-\lambda^k_{-} + a_+ \lambda^k_+\right )}
    =\lim_{n \to \infty}  2 \sqrt{q-2} \frac{\left ( \lambda^{n+1}_+ - \lambda^{n+1}_-\right )}{a_{-} \frac{1-\lambda_{-}^{n+1}}{1-\lambda_{-}}+a_{+} \frac{1-\lambda_{+}^{n+1}}{1-\lambda_{+}}} \notag \\
    &=\lim_{n \to \infty} 2 \sqrt{q-2} \frac{1-\left (\frac{\lambda_{-}}{\lambda_{+}} \right )^{n+1}}{ \frac{a_{-}}{1-\lambda_{-}}\frac{1-\lambda_{-}^{n+1}}{\lambda_{+}^{n+1}}+ \frac{a_{+}}{1-\lambda_{+}}\frac{1-\lambda_{+}^{n+1}}{\lambda_{+}^{n+1}}}
    =2\sqrt{q-2}\frac{\lambda_{+}-1}{a_{+}} 
\end{align}
since $ \left (\frac{\lambda_{-}}{\lambda_{+}} \right)^{n} \to 0$ for $n \to \infty$. Moreover, by a direct computation we can compute that
\[
i_e(\Lpq) = 2\sqrt{q-2}\frac{\lambda_{+}-1}{a_{+}}, 
\]
which was defined in Equation \eqref{eq:iso_Lpq}. 

\paragraph{\texttt{CASE (II), $p=3$.}}
We diagonalize $T_2$ by computing its eigenvalues resp.~eigenvectors:

\begin{align}\label{eq:eigen}
    \lambda_\pm 
    & = \frac{1}{2}\left ( q-4 \pm \sqrt{(q-6)(q-2)}\right )
\end{align}
resp.
\begin{align}
    e_+= \left ( \frac{6-q+\sqrt{12-8q+q^2}}{2(q-6)}, 1\right )^T, \,\, \text{ resp. } e_-=\left ( \frac{6-q-\sqrt{12-8q+q^2}}{2(q-6)}, 1\right )^T.
\end{align}
Analogously to before we write for the fraction
\begin{equation}
 \frac{|\partial_e B_n(\origin)|}{|B_n(\origin)|} =
\frac{2(1,0)T_2^{n} \begin{pmatrix} 3 \\ 3(q-4) \end{pmatrix}+(0,1)T_2^{n} \begin{pmatrix} 3 \\ 3(q-4) \end{pmatrix}}{3+\sum_{k=0}^{n-1} (1,1)T_2^{k} \begin{pmatrix}3 \\ 3(q-4) \end{pmatrix}}
\end{equation}
which reduces to 
\begin{align}
|I''_{n+1;3,q} |&= \frac{3 \left(\left(\sqrt{(q-6)(q-2)}-(q-2)\right) \lambda_-^{n}+\left(\sqrt{(q-6)(q-2)}+q-2\right) \lambda_+^{n}\right)}{2 \sqrt{(q-6)(q-2)}}, \notag 
\end{align}
and
\begin{align}
2|I''_{n+1;3,q}| + |I'_{n+1;3,q}| & = \frac{3 (q-2) \left(\left(4-q+\sqrt{(q-6) (q-2)}\right) \lambda_-^{n}+\left(q-4+\sqrt{(q-6) (q-2)}\right) \lambda_+^{n}\right)}{2 \sqrt{(q-6) (q-2)}} \notag 
\end{align}
resp.
\begin{align}
|B_n(\origin)| &=3+\sum_{k=0}^{n-1}\left (a_+\lambda_+^k+a_-\lambda^k_- \right )=3+a_+\frac{1-\lambda_+^{n}}{1-\lambda_+}+a_-\frac{1-\lambda_-^{n}}{1-\lambda_-},
\end{align}
where 
\begin{align}
    a_\pm=\frac{3}{2} \left ( q-3 \pm \frac{(q-5)(q-2)}{\sqrt{(q-6)(q-2)}} \right ). \notag 
\end{align}
By a direct computation, we obtain the claim
\begin{align}\label{limit_p3}
    \lim_{n \to \infty} \frac{|\partial_e B_n(\origin)|}{|B_n(\origin)|} 
    &=\frac{3}{2}\frac{q-2}{\sqrt{(q-6)(q-2)}}\left ( q-4+\sqrt{(q-6)(q-2)}\right ) \frac{\lambda_+-1}{a_+} \notag \\
    &=3\sqrt{\frac{q-2}{q-6}} \frac{\lambda_+(\lambda_+-1)}{a_+}
    =i_e (\mathcal{L}_{3,q})
\end{align}
since $ \left (\frac{\lambda_{-}}{\lambda_{+}} \right)^{n} \to 0$ for $n \to \infty$.

\vspace{0.2cm}
\subsubsection{Proof of Lemma \ref{upper}}
In the following let us give an exact formula for the perimeter of $\mathscr{M} \in \mathcal{M}_N$ for all $N \in \mathbb{N}$. First, we observe that if $N$ is such that $\mathcal{N}_o=\emptyset$, then $\mathscr{M} \equiv B_n(\origin)$ and we conclude by applying Lemma \ref{lem:balls}. Thus, suppose $\mathcal{N}_o\neq \emptyset$, i.e. $\mathscr{M} \not \equiv B_n(\origin)$, then
\begin{align}\label{estimate_perimeter_min}
|\partial_e \mathscr{M}|   
=\begin{cases}
         |I_{n+1;p,q}|+(q-4)(|\mathcal{N}_o \cap I_{n+1;p,q}|-1)+(q-2)(|\mathcal{N}_o \cap E_{n+1;p,q}|+1) 
         \qquad &\text{ if } p\geq 4, \\
         2|I''_{n+1;3,q}|+|I'_{n+1;3,q}|+(q-6)(|\mathcal{N}_o \cap I''_{n+1;3,q}|-1)+(q-4)(|\mathcal{N}_o \cap I'_{n+1;3,q}|+1) \qquad &\text{ if } p=3.
\end{cases}
\end{align}
For an example see Figure \ref{fig:ex_edges}.
\begin{figure}[!hbtp]
\centering
\includegraphics[scale=0.3]{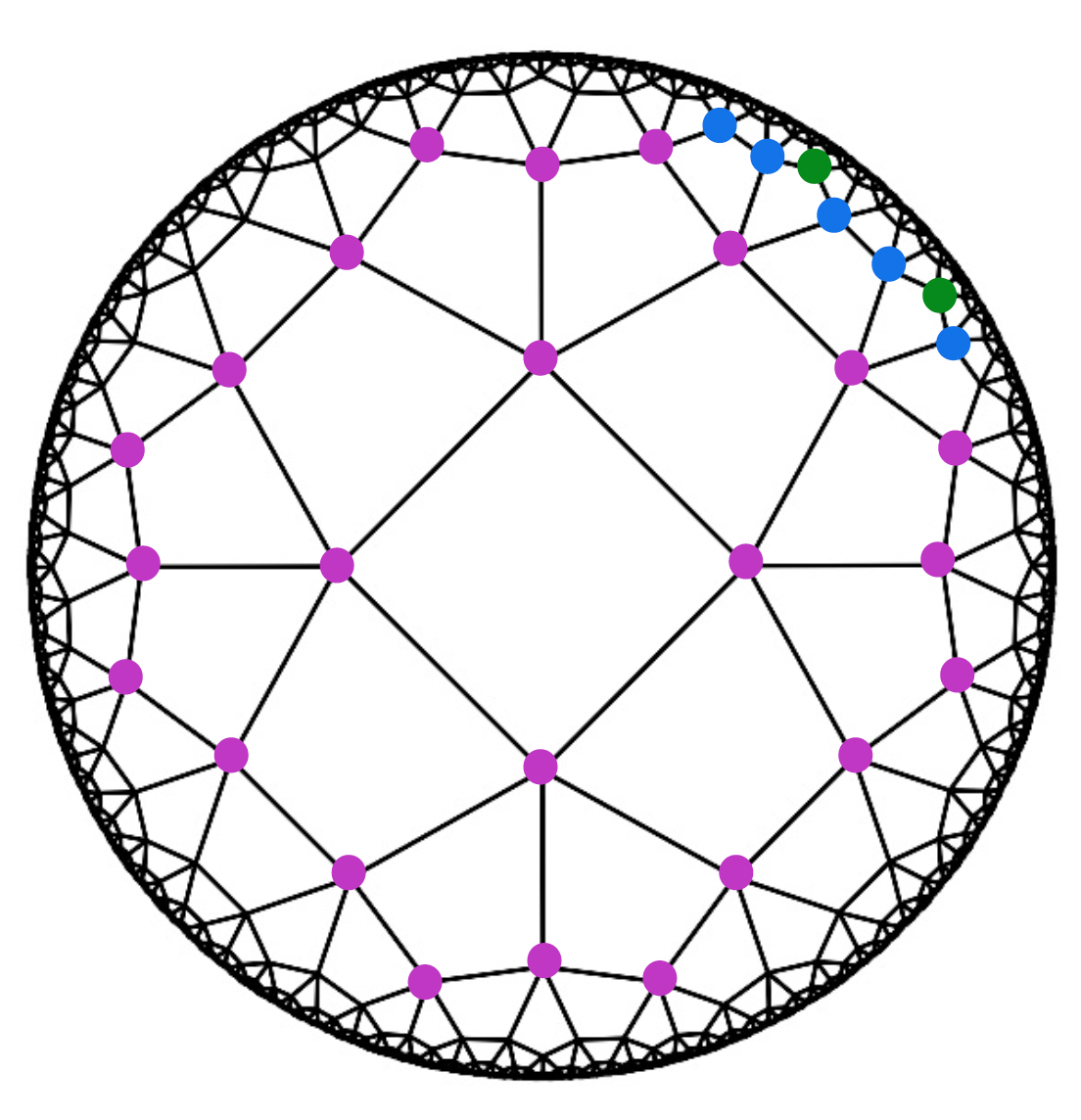}
\put(-89,78){$\origin$}
\put(-87,84){ $\bullet$}
\caption{Example of a set $\mathscr{M}$ satisfying the relation \eqref{estimate_perimeter_min} for $p=4$, $q=5$. We have that $|\mathcal{N}_o|=7$, $|\mathcal{N}_o \cap I_{2;4,5}|=5$, $|\mathcal{N}_o\cap E_{2;4,5}|=2$, $|I_{2;4,5}|=48$ and $|\partial_e \mathscr{M}|=61= 48 + 1\cdot (5-1)+3\cdot (2+1)$} \label{fig:ex_edges}
\end{figure}

We distinguish two cases: $p\geq 4$ and  $p=3$.

\paragraph{\texttt{CASE (I), $p \geq 4$.}}
We note that $|\mathcal{N}_o \cap I_{n+1;p,q}| \geq\frac{|I_{n+1;p,q}|}{|L_{n+1}|}|\mathcal{N}_o|$, see Remark \ref{bound1} for details.
By using Equation \eqref{estimate_perimeter_min}, we can bound from above the ratio as follows
\begin{align}
    \frac{|\partial_e \mathscr{M}|}{|\mathscr{M}|}&=\frac{|I_{n+1;p,q}|+(q-4)(|\mathcal{N}_o \cap I_{n+1;p,q}|-1)+(q-2)(|\mathcal{N}_o \cap E_{n+1;p,q}|+1) }{\left | \bigcup_{j=0}^{n}L_j\right|+|\mathcal{N}_o|} \notag \\
    &=\frac{|I_{n+1;p,q}|+(q-2)|\mathcal{N}_o|-2|\mathcal{N}_o \cap I_{n+1;p,q}|+2 }{\left | \bigcup_{j=0}^{n}L_j\right|+|\mathcal{N}_o|} \notag \\
& \leq \frac{|I_{n+1;p,q}|+ \left (q-2-2\frac{|I_{n+1}|}{|L_{n+1}|} \right )|\mathcal{N}_o|+2 }{\left | \bigcup_{j=0}^{n}L_j\right|+|\mathcal{N}_o|}.
\end{align}
Since $| \bigcup_{j=0}^{n}L_j|>0$ and $\left (q-2-2\frac{|I_{n+1}|}{|L_{n+1}|} \right )>1$, see Remark \ref{bound2} for details, we have an increasing function of $|\mathcal{N}_o| < |L_{n+1}|$ and we obtain
\begin{align}
   \frac{|\partial_e \mathscr{M}|}{|\mathscr{M}|}& < \frac{|I_{n+1;p,q}|+\left (q-2-2\frac{|I_{n+1}|}{|L_{n+1}|} \right ) |L_{n+1}|+2 }{\left | \bigcup_{j=0}^{n}L_j\right|+|L_{n+1}|} \notag \\
   &= \frac{|I_{n+1;p,q}|+\left (q-2 \right )(|I_{n+1;p,q}|+|E_{n+1;p,q}|)-2|I_{n+1;p,q}|+2 }{\left | \bigcup_{j=0}^{n+1}L_j\right|} \notag \\
   &= \frac{\overbrace{(q-3)|I_{n+1;p,q}|+(q-2)|E_{n+1;p,q}|}^{|I_{n+2;p,q}|}+2 }{\left | \bigcup_{j=0}^{n+1}L_j\right|} \notag \\
   &= \frac{|I_{n+2;p,q}|+2}{\left | \bigcup_{j=0}^{n+1}L_j\right|} 
   \underset{n \to \infty}{\to} i( \Lpq).
\end{align}

\paragraph{\texttt{CASE (II), $p=3$.}} First, we note that, if $p=3$, then $q \geq 7$ by the assumption $1/p+1/q<1/2$. Moreover, we observe that $|\mathcal{N}_o \cap I''_{n+1;p,q}| \geq\frac{|I''_{n+1;p,q}|}{|L_{n+1}|}|\mathcal{N}_o|$, thus
\begin{align}
    \frac{|\partial_e\mathscr{M}|}{|\mathscr{M}|}&=\frac{2|I''_{n+1;p,q}|+|I'_{n+1;p,q}|+(q-6)(|\mathcal{N}_o \cap I''_{n+1;p,q}|-1)+(q-4)(|\mathcal{N}_o \cap I'_{n+1;p,q}|+1)}{|\bigcup_{j=0}^{n} L_j| +|\mathcal{N}_o|} \notag \\
    &=\frac{2|I''_{n+1;p,q}|+|I'_{n+1;p,q}|+(q-4)|\mathcal{N}_o|-2|\mathcal{N}_o \cap I''_{n+1;p,q}|+2}{|\bigcup_{j=0}^{n} L_j| +|\mathcal{N}_o|} \notag \\
    & \leq \frac{2|I''_{n+1;p,q}|+|I'_{n+1;p,q}|+\left (q-4 - 2\frac{|I''_{n+1;p,q}|}{|L_{n+1}|} \right )|\mathcal{N}_o|+2}{|\bigcup_{j=0}^{n} L_j| +|\mathcal{N}_o|}.
\end{align}
Since $| \bigcup_{j=0}^{n}L_j|>0$ and $\left (q-4 - 2\frac{|I''_{n+1;p,q}|}{|L_{n+1}|} \right )>1$,  see Remark \ref{bound2} for details, we again have an increasing function of $|\mathcal{N}_o| < |L_{n+1}|$ and we obtain
\begin{align}
   \frac{|\partial_e \mathscr{M}|}{|\mathscr{M}|}& <  \frac{|I'_{n+1;p,q}|+(q-4)|L_{n+1}|+2}{|\bigcup_{j=0}^{n+1} L_j|}=\frac{(q-3)|I'_{n+1;p,q}|+(q-4)|I''_{n+1;p,q}|+2}{|\bigcup_{j=0}^{n+1} L_j|} \notag \\
   &=\frac{\overbrace{(q-5)|I'_{n+1;p,q}|+(q-6)|I''_{n+1;p,q}|}^{I'_{n+2;p,q}}+ \overbrace{2|I'_{n+1;p,q}|+2|I''_{n+1;p,q}|}^{2I''_{n+2;p,q}}+2}{|\bigcup_{j=0}^{n+1} L_j|}\underset{n \to \infty}{\to} i(\Lpq)
   \end{align}

\begin{remark}\label{bound1}
    By Definition \ref{def:minset}, we recall that $|\mathcal{N}_o|$ contains the maximal number of vertices in $I_{n+1;p,q}$, i.e., $|\mathcal{N}_o \cap I_{n+1;p,q}|$ is maximal. For $j=1,...,|L_{n+1}|$, we define $S_j$ the ordered strips $L_{n+1}$ of length $|S_j|=|\mathcal{N}_o|$. Thus, we obtain 
    \begin{align}
        |I_{n+1;p,q}|=\sum_{j=1}^{|L_{n+1}|}\frac{|S_j \cap I_{n+1;p,q}|}{|S_j|}\leq |L_{n+1}|\max_{j\in \{1,...,|L_{n+1}|\}} \frac{|S_j \cap I_{n+1;p,q}|}{|S_j|}=|L_{n+1}|\frac{|\mathcal{N}_o \cap I_{n+1;p,q}|}{|\mathcal{N}_o|}
    \end{align}
\end{remark}
\begin{remark}\label{bound2} We prove that $q-2-2\frac{|I_{n+1;p,q}|}{|L_{n+1}|}>1$ and $q-4 - 2\frac{|I''_{n+1;p,q}|}{|L_{n+1}|}>1$. By definition of $I_{n+1;p,q}$ and recall that the eigenvalues $\lambda_{\pm}$ were defined in Equation \eqref{eq:eigen} and the constants $a_{\pm}$ in Equation \eqref{eq:apm},
    \begin{align}
        q-2-2\frac{|I_{n+1;p,q}|}{|L_{n+1}|}
        &=q-2-4 \frac{\sqrt{q-2} (\lambda_+^{n+1}-\lambda_-^{n+1})}{a_+\lambda_+^{n+1}+a_-\lambda_-^{n+1}} \geq q-2-4 \frac{\sqrt{q-2}}{a_+} \notag \\
        &=q-2-\frac{4}{(p-2) \left ( 1 +\sqrt{1-\frac{4}{(p-2)(q-2)}}\right )}>1
    \end{align}
    where we used that the first function is decreasing in $n$,  the second function is increasing in $p$ and $q$ with $1/p+1/q<1/2$. Moreover,
    \begin{align}
        g(q,n)&\coloneqq q-4 - 2\frac{|I''_{n+1;p,q}|}{|L_{n+1}|} 
        =q-4-\frac{3 \left(\left(1-\sqrt{\frac{q-2}{q-6}}\right) \lambda_-^{n+1}+\left(1+\sqrt{\frac{q-2}{q-6}}\right) \lambda_+^{n+1}\right)}{a_+\lambda_+^{n+1}+a_-\lambda_-^{n+1}} \notag \\
        & \geq q-4-\frac{3 \left(\left(1-\sqrt{\frac{q-2}{q-6}}\right) \lambda_-+\left(1+\sqrt{\frac{q-2}{q-6}}\right) \lambda_+\right)}{a_+\lambda_++a_-\lambda_-} \geq g(7,1)>1
    \end{align}
    where we used that the first function is increasing in $n$ and the second function is increasing in $q$, where $q\geq 7$.
\end{remark}

\vspace{1cm}
\subsection*{Acknowledgments} 
The authors are grateful to Russ Lyons for precious comments on a previous version of this manuscript.
The authors would like to thank Annika Brockhaus and the authors of the Python library \cite{tiles} for helping producing the pictures, David Adame-Carillo for interesting discussion on the partitioning of the strips with maximum cardinality and Norbert Peyerimhoff for explaining his paper~\cite{keller2008geometric} to them. V.J. thanks GNAMPA. M.D'A.~is grateful to the Department of Mathematics of the University of Utrecht for excellent working conditions in the occasion of an invitation (November 2024), during which this work has been partly done.

\vspace{1cm}
%%%%%%%%%%%%%%%%%%%%%%%%%%%%%%%%%
\printcredits

\bibliographystyle{cas-model2-names}
\bibliography{biblio}

\end{document}